\title[Unbounded twisted complexes]{Unbounded twisted complexes}
\author{Rina Anno}
\email{ranno@math.ksu.edu}
\address{Department of Mathematics \\
Kansas State University \\
138 Cardwell Hall \\
Manhattan, KS 66506\\
USA}
\author{Timothy Logvinenko} 
\email{LogvinenkoT@cardiff.ac.uk} 
\address{School of Mathematics\\ 
Cardiff University\\
Senghennydd Road,\\
Cardiff, CF24 4AG\\
UK}
\let\amsamp=&
\gdef\smallampmatrix{%
  \begingroup
  \let&=\amsamp
  \begin{smallmatrix}%
}
\gdef\endsmallampmatrix{\end{smallmatrix}\endgroup}
\DeclareMathOperator{\obj}{Ob}
\DeclareMathOperator{\homm}{Hom}
\DeclareMathOperator{\eend}{End}
\DeclareMathOperator{\picr}{Pic}
\DeclareMathOperator{\cl}{Cl}
\DeclareMathOperator{\action}{act}
\DeclareMathOperator{\modd}{\bf Mod}
\DeclareMathOperator{\id}{Id}
\DeclareMathOperator{\vectspaces}{\bf Vect}
\DeclareMathOperator{\opp}{{opp}}
\DeclareMathOperator{\fg}{{\it fg}}
\DeclareMathOperator{\qrep}{\it \mathcal{Q}r}
\DeclareMathOperator{\hproj}{\mathcal{P}}
\DeclareMathOperator{\semifree}{\mathcal{S}\mathcal{F}}
\DeclareMathOperator{\sffg}{\mathcal{S}\mathcal{F}_{\fg}}
\DeclareMathOperator{\perf}{{\it \mathcal{P}erf}}
\DeclareMathOperator{\hperf}{{\it h\mathcal{P}erf}}
\DeclareMathOperator{\hmtpy}{{Ho}}
\DeclareMathOperator{\tria}{{Tria}}
\DeclareMathOperator{\twcx}{{Tw}}
\DeclareMathOperator{\twbicx}{{Twbi}}
\DeclareMathOperator{\pretriag}{{Pre\text{-}Tr}}
\DeclareMathOperator{\DGFun}{{DGFun}}
\DeclareMathOperator{\TPair}{{\bf TPair}}
\DeclareMathOperator{\alg}{{\bf Alg}}
\DeclareMathOperator{\conv}{{Conv}}
\DeclareMathOperator{\free}{{\bf Free}}
\DeclareMathOperator{\eilmoor}{{\mathcal{E}}}
\DeclareMathOperator{\coeilmoor}{{co\mathcal{E}}}
\DeclareMathOperator{\kleisli}{{\mathcal{K}\mathcal{S}}}
\DeclareMathOperator{\cokleisli}{{co\mathcal{K}\mathcal{S}}}
\DeclareMathOperator{\cxrow}{{\mathcal{C}xrow}}
\DeclareMathOperator{\cxcol}{{\mathcal{C}xcol}}
\DeclareMathOperator{\Hom}{Hom}
\begin{document}

\def\bv{\mathbf{v}}
\def\kgc_{K^*_G(\mathbb{C}^n)}
\def\kgchi_{K^*_\chi(\mathbb{C}^n)}
\def\kgcf_{K_G(\mathbb{C}^n)}
\def\kgchif_{K_\chi(\mathbb{C}^n)}
\def\gpic_{G\text{-}\picr}
\def\gcl_{G\text{-}\cl}
\def\trch_{{\chi_{0}}}
\def\regring{{R}}
\def\regrep{{V_{\text{reg}}}}
\def\givrep{{V_{\text{giv}}}}
\def\lbar{{(\mathbb{Z}^n)^\vee}}
\def\genpx_{{p_X}}
\def\genpy_{{p_Y}}
\def\genpcn_{p_{\mathbb{C}^n}}
\def\gnat{gnat}
\def\twalg{{\regring \rtimes G}}
\def\L{{\mathcal{L}}}
\def\gcd{\mbox{gcd}}
\def\lcm{\mbox{lcm}}
\def\tf{{\tilde{f}}}
\def\tD{{\tilde{D}}}
\def\A{{\mathcal{A}}}
\def\B{{\mathcal{B}}}
\def\C{{\mathcal{C}}}
\def\D{{\mathcal{D}}}
\def\E{{\mathcal{E}}}
\def\F{{\mathcal{F}}}
\def\H{{\mathcal{H}}}
\def\L{{\mathcal{L}}}
\def\M{{\mathcal{M}}}
\def\N{{\mathcal{N}}}
\def\R{{\mathcal{R}}}
\def\T{{\mathcal{T}}}
\def\RF{{\mathcal{R}\mathcal{F}}}
\def\barA{{\bar{\mathcal{A}}}}
\def\barAi{{\bar{\mathcal{A}}_1}}
\def\barAj{{\bar{\mathcal{A}}_2}}
\def\barB{{\bar{\mathcal{B}}}}
\def\barC{{\bar{\mathcal{C}}}}
\def\barD{{\bar{\mathcal{D}}}}
\def\barT{{\bar{\mathcal{T}}}}
\def\barM{{\bar{\mathcal{M}}}}
\def\Aopp{{\A^{\opp}}}
\def\Bopp{{\B^{\opp}}}
\def\Copp{{\C^{\opp}}}
\def\aA{\leftidx{_{a}}{\A}}
\def\bA{\leftidx{_{b}}{\A}}
\def\Aa{{\A_a}}
\def\Ea{E_a}
\def\aE{\leftidx{_{a}}{E}{}}
\def\Eb{E_b}
\def\bE{\leftidx{_{b}}{E}{}}
\def\Fa{F_a}
\def\aF{\leftidx{_{a}}{F}{}}
\def\Fb{F_b}
\def\bF{\leftidx{_{b}}{F}{}}
\def\aM{\leftidx{_{a}}{M}{}}
\def\aN{\leftidx{_{a}}{N}{}}
\def\aMb{\leftidx{_{a}}{M}{_{b}}}
\def\aNb{\leftidx{_{a}}{N}{_{b}}}
\def\rfRFa{\leftidx{_{\RF}}{\RF}{_{\A}}}
\def\aRFrf{\leftidx{_{\A}}{\RF}{_{\RF}}}
\def\biAMA{\leftidx{_{\A}}{M}{_{\A}}}
\def\biAMC{\leftidx{_{\A}}{M}{_{\C}}}
\def\biCMA{\leftidx{_{\C}}{M}{_{\A}}}
\def\biCMC{\leftidx{_{\C}}{M}{_{\C}}}
\def\biALA{\leftidx{_{\A}}{L}{_{\A}}}
\def\biALC{\leftidx{_{\A}}{L}{_{\C}}}
\def\biCLA{\leftidx{_{\C}}{L}{_{\A}}}
\def\biCLC{\leftidx{_{\C}}{L}{_{\C}}}
\def\Na{{N_a}}
\def\vectk{{\vectspaces\text{-}k}}
\def\vectkfg{{\vectspaces_{\text{fg}}\text{-}k}}
\def\modk{{\modd\text{-}k}}
\def\Amod{{\A\text{-}\modd}}
\def\modA{{\modd\text{-}\A}}
\def\modbar{{\overline{\modd}}}
\def\modbarA{{\overline{\modd}\text{-}\A}}
\def\modbarAopp{{\overline{\modd}\text{-}\Aopp}}
\def\modB{{\modd\text{-}\B}}
\def\modC{{\modd\text{-}\C}}
\def\modD{{\modd\text{-}\D}}
\def\modbarB{{\overline{\modd}\text{-}\B}}
\def\modbarC{{\overline{\modd}\text{-}\C}}
\def\modbarD{{\overline{\modd}\text{-}\D}}
\def\modbarBopp{{\overline{\modd}\text{-}\Bopp}}
\def\kmodk{{k\text-\modd\text{-}k}}
\def\bikmodk{{k_\bullet\text-\modd\text{-}k_\bullet}}
\def\AmodA{{\A\text{-}\modd\text{-}\A}}
\def\AmodM{{\A\text{-}\modd\text{-}\M}}
\def\AmodB{{\A\text{-}\modd\text{-}\B}}
\def\AmodT{{\A\text{-}\modd\text{-}\T}}
\def\BmodB{{\B\text{-}\modd\text{-}\B}}
\def\BmodA{{\B\text{-}\modd\text{-}\A}}
\def\DmodD{{\D\text{-}\modd\text{-}\D}}
\def\MmodA{{\M\text{-}\modd\text{-}\A}}
\def\MmodM{{\M\text{-}\modd\text{-}\M}}
\def\TmodA{{\T\text{-}\modd\text{-}\A}}
\def\TmodT{{\T\text{-}\modd\text{-}\T}}
\def\AmodbarA{\A\text{-}{\overline{\modd}\text{-}\A}}
\def\AmodbarB{\A\text{-}{\overline{\modd}\text{-}\B}}
\def\AmodbarC{\A\text{-}{\overline{\modd}\text{-}\C}}
\def\AmodbarD{\A\text{-}{\overline{\modd}\text{-}\D}}
\def\AmodbarM{\A\text{-}{\overline{\modd}\text{-}\M}}
\def\AmodbarT{\A\text{-}{\overline{\modd}\text{-}\T}}
\def\BmodbarA{\B\text{-}{\overline{\modd}\text{-}\A}}
\def\BmodbarB{\B\text{-}{\overline{\modd}\text{-}\B}}
\def\BmodbarC{\B\text{-}{\overline{\modd}\text{-}\C}}
\def\BmodbarD{\B\text{-}{\overline{\modd}\text{-}\D}}
\def\CmodbarA{\C\text{-}{\overline{\modd}\text{-}\A}}
\def\CmodbarB{\C\text{-}{\overline{\modd}\text{-}\B}}
\def\CmodbarC{\C\text{-}{\overline{\modd}\text{-}\C}}
\def\CmodbarD{\C\text{-}{\overline{\modd}\text{-}\D}}
\def\DmodbarA{\D\text{-}{\overline{\modd}\text{-}\A}}
\def\DmodbarB{\D\text{-}{\overline{\modd}\text{-}\B}}
\def\DmodbarC{\D\text{-}{\overline{\modd}\text{-}\C}}
\def\DmodbarD{\D\text{-}{\overline{\modd}\text{-}\D}}
\def\TmodbarA{\T\text{-}{\overline{\modd}\text{-}\A}}
\def\MmodbarA{\M\text{-}{\overline{\modd}\text{-}\A}}
\def\MmodbarM{\M\text{-}{\overline{\modd}\text{-}\M}}
\def\modbarT{{\overline{\modd}\text{-}T}}
\def\freeA{{\free\text{-}A}}
\def\freeAS{{\free_S\text{-}A}}
\def\freepfA{{\free_{pf}\text{-}A}}
\def\Afree{{A\text{-}\free}}
\def\AfreeA{{A\text{-}\free\text{-}A}}
\def\AfreeB{{A\text{-}\free\text{-}B}}
\def\sfA{{\semifree(\A)}}
\def\sfB{{\semifree(\B)}}
\def\sffgA{{\sffg(\A)}}
\def\sffgB{{\sffg(\B)}}
\def\hprojA{{\hproj(\A)}}
\def\hprojB{{\hproj(\B)}}
\def\qrepA{{\qrep(\A)}}
\def\qrepB{{\qrep(\B)}}
\def\opp{{\text{opp}}}
\def\Aperf{{\A^{\text{pf}}}}
\def\hperfA{{\hperf(\A)}}
\def\hperfB{{\hperf(\B)}}
\def\barperf{{\it\mathcal{P}\overline{er}f}}
\def\barperfA{{\barperf\text{-}\A}}
\def\barperfB{{\barperf\text{-}\B}}
\def\barperfC{{\barperf\text{-}\C}}
\def\qrhpr{{\hproj^{qr}}}
\def\qrhprA{{\qrhpr(\A)}}
\def\qrhprB{{\qrhpr(\B)}}
\def\qrsf{{\semifree^{qr}}}
\def\qrsf{{\semifree^{qr}}}
\def\qrsfA{{\qrsf(\A)}}
\def\qrsfB{{\qrsf(\B)}}
\def\Aperfsf{{\semifree^{\A\text{-}\perf}(\AbimB)}}
\def\Bperfsf{{\semifree^{\B\text{-}\perf}(\AbimB)}}
\def\Aprfhpr{{\hproj^{\A\text{-}\perf}(\AbimB)}}
\def\Bprfhpr{{\hproj^{\B\text{-}\perf}(\AbimB)}}
\def\Aqrhpr{{\hproj^{\A\text{-}qr}(\AbimB)}}
\def\Bqrhpr{{\hproj^{\B\text{-}qr}(\AbimB)}}
\def\Aqrsf{{\semifree^{\A\text{-}qr}(\AbimB)}}
\def\Bqrsf{{\semifree^{\B\text{-}qr}(\AbimB)}}
\def\modAopp{{\modd\text{-}\Aopp}}
\def\modBopp{{\modd\text{-}\Bopp}}
\def\AmodA{{\A\text{-}\modd\text{-}\A}}
\def\AmodB{{\A\text{-}\modd\text{-}\B}}
\def\AmodC{{\A\text{-}\modd\text{-}\C}}
\def\BmodA{{\B\text{-}\modd\text{-}\A}}
\def\BmodB{{\B\text{-}\modd\text{-}\B}}
\def\BmodC{{\B\text{-}\modd\text{-}\C}}
\def\CmodA{{\C\text{-}\modd\text{-}\A}}
\def\CmodB{{\C\text{-}\modd\text{-}\B}}
\def\CmodC{{\C\text{-}\modd\text{-}\C}}
\def\CmodD{{\C\text{-}\modd\text{-}\D}}
\def\AbimA{{\A\text{-}\A}}
\def\AbimC{{\A\text{-}\C}}
\def\AbimM{{\A\text{-}\M}}
\def\BbimA{{\B\text{-}\A}}
\def\BbimB{{\B\text{-}\B}}
\def\BbimC{{\B\text{-}\C}}
\def\BbimD{{\B\text{-}\D}}
\def\CbimA{{\C\text{-}\A}}
\def\CbimB{{\C\text{-}\B}}
\def\CbimC{{\C\text{-}\C}}
\def\DbimA{{\D\text{-}\A}}
\def\DbimB{{\D\text{-}\B}}
\def\DbimC{{\D\text{-}\C}}
\def\DbimD{{\D\text{-}\D}}
\def\MbimA{{\M\text{-}\A}}
\def\AhprA{{\hproj\left(\AbimA\right)}}
\def\BhprB{{\hproj\left(\BbimB\right)}}
\def\AhprB{{\hproj\left(\AbimB\right)}}
\def\BhprA{{\hproj\left(\BbimA\right)}}
\def\AbarA{{\overline{\A\text{-}\A}}}
\def\AbarB{{\overline{\A\text{-}\B}}}
\def\BbarA{{\overline{\B\text{-}\A}}}
\def\BbarB{{\overline{\B\text{-}\B}}}
\def\QAbimB{{Q\A\text{-}\B}}
\def\AbimB{{\A\text{-}\B}}
\def\AbimC{{\A\text{-}\C}}
\def\AonebimB{{\A_1\text{-}\B}}
\def\AtwobimB{{\A_2\text{-}\B}}
\def\BbimA{{\B\text{-}\A}}
\def\MddA{{M^{\tilde{\A}}}}
\def\MddB{{M^{\tilde{\B}}}}
\def\MhdA{{M^{h\A}}}
\def\MhdB{{M^{h\B}}}
\def\NhdB{{N^{h\B}}}
\def\Cat{{\it \mathcal{C}at}}
\def\twoCat{{\it 2\text{-}\;\mathcal{C}at}}
\def\DGCat{{DG\text{-}Cat}}
\def\HoDGCat{{\hmtpy(\DGCat)}}
\def\HoDGCatV{{\hmtpy(\DGCat_\mathbb{V})}}
\def\MoDGCat{{Mo(\DGCat)}}
\def\tr{{tr}}
\def\pretr{{pretr}}
\def\kctr{{kctr}}
\def\PreTrCat{{\DGCat^\pretr}}
\def\KcTrCat{{\DGCat^\kctr}}
\def\HoPretrCat{{\hmtpy(\PreTrCat)}}
\def\HoKcTrCat{{\hmtpy(\KcTrCat)}}
\def\Aquasirep{{\A\text{-}qr}}
\def\QAquasirep{{Q\A\text{-}qr}}
\def\Bquasirep{{\B\text{-}qr}} 
\def\lderA{{\tilde{\A}}} 
\def\lderB{{\tilde{\B}}} 
\def\adjunit{{\text{adj.unit}}}
\def\adjcounit{{\text{adj.counit}}}
\def\degzero{{\text{deg.0}}}
\def\degone{{\text{deg.1}}}
\def\degminusone{{\text{deg.-$1$}}}
\def\barzeta{{\overline{\zeta}}}
\def\Ract{{R {\action}}}
\def\barRact{{\overline{\Ract}}}
\def\actL{{{\action} L}}
\def\baractL{{\overline{\actL}}}
\def\Ainfty{{A_{\infty}}}
\def\moddinf{{{\bf Mod}_{\infty}}}
\def\nodd{{{\bf Nod}}}
\def\noddinf{{{\bf Nod}_{\infty}}}
\def\perfinf{{{\bf Perf}_{\infty}}}
\def\conodd{{{\bf coNod}}}
\def\comodd{{{\bf coMod}}}
\def\conoddinf{{{\bf coNod}_{\infty}}}
\def\conodddgstrict{{{\bf coNod}_{dg}^{strict}}}
\def\conodddg{{{\bf coNod}_{dg}}}
\def\conodddghu{{{\bf coNod}_{dg}^{hu}}}
\def\conoddinfshu{{{\bf coNod}_{\infty}^{shu}}}
\def\noddinfstr{{{\bf Nod}^{\text{strict}}_{\infty}}}
\def\noddinfA{{\noddinf\A}}
\def\noddinfB{{\noddinf\B}}
\def\perfinfA{{\perfinf\A}}
\def\perfinfB{{\perfinf\B}}
\def\nodA{{\noddinf\text{-}A}}
\def\nodstrA{{\nodd\text{-}A}}
\def\nodstrhuA{{\nodd^{hu}\text{-}A}}
\def\nodhuA{{\noddinfhu\text{-}A}}
\def\nodhupfA{{\noddinfhupf\text{-}A}}
\def\nodB{{\noddinf\text{-}B}}
\def\nodAbic{{\left(\nodA\right)^{\text{bicat}}}}
\def\Anodbic{{\left(\Anod\right)^{\text{bicat}}}}
\def\nodBbic{{\left(\nodB\right)^{\text{bicat}}}}
\def\Bnodbic{{\left(\Bnod\right)^{\text{bicat}}}}
\def\repA{{A^{\B}}}
\def\nodrepA{{\noddinf\text{-}\repA}}
\def\pretrA{{A^{\pretriag\A}}}
\def\nodpretrA{{\noddinf\text{-}\pretrA}}
\def\twcxA{{A^{\twcxub\A}}}
\def\nodtwcxA{{\noddinf\text{-}\twcxA}}
\def\twcxrepA{{A^{\twcxub\modA}}}
\def\nodtwcxrepA{{\noddinf\text{-}\twcxrepA}}
\def\Anod{{A\text{-}\noddinf}}
\def\Bnod{{B\text{-}\noddinf}}
\def\AnodA{{A\text{-}\noddinf\text{-}A}}
\def\AnodB{{A\text{-}\noddinf\text{-}B}}
\def\noddinfAB{{\noddinf\AbimB}}
\def\noddinfBA{{\noddinf\BbimA}}
\def\noddinfu{{({\bf Nod}_{\infty})_u}}
\def\noddinfuA{{(\noddinfA)_u}}
\def\noddinfhu{{{\bf Nod}_{\infty}^{hu}}}
\def\noddinfhupf{{{\bf Nod}_{\infty}^{hupf}}}
\def\noddinfhuA{{(\noddinfA)_{hu}}}
\def\noddinfhuB{{(\noddinfB)_{hu}}}
\def\noddinfdg{{({\bf Nod}_{\infty})_{dg}}}
\def\noddinfdgA{{(\noddinfA)_{dg}}}
\def\noddinfdgAA{{(\noddinf\AbimA)_{dg}}}
\def\noddinfdgAB{{(\noddinf\AbimB)_{dg}}}
\def\noddinfdgB{{(\noddinfB)_{dg}}}
\def\moddinf{{\modd_{\infty}}}
\def\moddinfA{{\modd_{\infty}\A}}
\def\naug{{\text{na}}}
\def\infbar{B_\infty}
\def\infbarl{{B^{l}_\infty}}
\def\infbarr{{B^{r}_\infty}}
\def\infbarbi{{B^{bi}_\infty}}
\def\infbarnaug{{B^{\naug}_\infty}}
\def\infcobar{{CB_\infty}}
\def\infbarres{{\bar{B}_\infty}}
\def\infcobarres{{C\bar{B}_\infty}}
\def\infbarA{{B^A_\infty}}
\def\infbarB{{B^B_\infty}}
\def\inftimes{{\overset{\infty}{\otimes}}}
\def\infhom{{\overset{\infty}{\homm}}}
\def\barhom{{\rm H\overline{om}}}
\def\barend{{\overline{\eend}}}
\def\bartimes{{\;\overline{\otimes}}}
\def\bartimesA{{\;\overline{\otimes}_\A\;}}
\def\bartimesB{{\;\overline{\otimes}_\B\;}}
\def\bartimesC{{\;\overline{\otimes}_\C\;}}
\def\triaA{{\tria \A}}
\def\TPairdg{{\TPair^{dg}}}
\def\algA{{\alg(\A)}}
\def\Ainfty{{A_{\infty}}}
\def\gpmu{{\boldsymbol{\mu}}}
\def\odd{{\text{odd}}}
\def\even{{\text{even}}}
\def\twcxub{\twcx^{\pm}}
\def\twcxmns{\twcx^{-}}
\def\twcxpls{\twcx^{+}}
\def\twbicxub{\twbicx^{\pm}}
\def\twbicxmns{\twbicx^{-}}
\def\twbicxpls{\twbicx^{+}}
\def\twbios{\twbicx^{os}}
\def\twbiosub{{\twbicx^{\pm,os}}}
\def\twbiosmns{{\twbicx^{-,os}}}
\def\twbiospls{{\twbicx^{+,os}}}
\def\pretriagub{{\pretriag^{\pm}}}
\def\pretriagmns{{\pretriag^{-}}}
\def\pretriagpls{{\pretriag^{+}}}
\def\eilmoordg{\eilmoor^{\text{dg}}}
\def\hprojemdg{\hproj^{\text{dg}}}
\def\hperfemdg{\hperf^{\text{dg}}}
\def\coeilmoordg{\coeilmoor^{\text{dg}}}
\def\kleisliA{{\kleisli(A)}}
\def\kleisliAS{{\kleisli_S(A)}}
\def\kleisliwk{\kleisli^{\text{wk}}}
\def\eilmoorwk{\eilmoor^{\text{wk}}}
\def\eilmoorwkpf{\eilmoor^{\text{wk,perf}}}
\def\eilmoorwmor{\eilmoor^{\text{dg,wkmor}}}
\def\coeilmoorwk{\coeilmoor^{\text{wk}}}
\def\coeilmoorwmor{\coeilmoor^{\text{dg,wkmor}}}
\def\dnat{{d_{\text{nat}}}}
\def\bareta{{\tilde{\eta}}}
\def\barepsilon{{\tilde{\epsilon}}}
\def\infalg{{\alg_\infty}}
\def\enhcatkc{{\bf EnhCat_{kc}}}
\def\enhcatkcdg{{\bf EnhCat_{kc}^{dg}}}

\def\conodA{{\conoddinf\text{-}A}}
\def\conodB{{\conoddinf\text{-}B}}
\def\conodC{{\conoddinf\text{-}C}}
\def\conodstrC{{\conodd\text{-}C}}
\def\Aconod{{A\text{-}\conoddinf}}
\def\AconodB{{A\text{-}\conoddinf\text{-}B}}
\def\conodhuA{{{\bf coNod}_{\infty}^{hu}\text{-}A}}
\def\conodshuA{{{\bf coNod}_{\infty}^{shu}\text{-}A}}
\def\conodhuB{{{\bf coNod}_{\infty}^{hu}\text{-}B}}
\def\conodshuB{{{\bf coNod}_{\infty}^{shu}\text{-}B}}
\def\conodhuC{{{\bf coNod}_{\infty}^{hu}\text{-}C}}
\def\conodshuC{{{\bf coNod}_{\infty}^{shu}\text{-}C}}
\def\cokleisliC{{\cokleisli(C)}}
\def\cokleisliCS{{\cokleisli_S(C)}}

\theoremstyle{definition}
\newtheorem{defn}{Definition}[section]
\newtheorem*{defn*}{Definition}
\newtheorem{exmpl}[defn]{Example}
\newtheorem*{exmpl*}{Example}
\newtheorem{exrc}[defn]{Exercise}
\newtheorem*{exrc*}{Exercise}
\newtheorem*{chk*}{Check}
\newtheorem{remark}[defn]{Remark}
\newtheorem*{remark*}{Remark}
\theoremstyle{plain}
\newtheorem{theorem}{Theorem}[section]
\newtheorem*{theorem*}{Theorem}
\newtheorem{conj}[defn]{Conjecture}
\newtheorem*{conj*}{Conjecture}
\newtheorem{question}[defn]{Question}
\newtheorem*{question*}{Question}
\newtheorem{prps}[defn]{Proposition}
\newtheorem*{prps*}{Proposition}
\newtheorem{cor}[defn]{Corollary}
\newtheorem*{cor*}{Corollary}
\newtheorem{lemma}[defn]{Lemma}
\newtheorem*{claim*}{Claim}
\newtheorem{Specialthm}{Theorem}
\renewcommand\theSpecialthm{\Alph{Specialthm}}
\numberwithin{equation}{section}
\renewcommand{\textfraction}{0.001}
\renewcommand{\topfraction}{0.999}
\renewcommand{\bottomfraction}{0.999}
\renewcommand{\floatpagefraction}{0.9}
\setlength{\textfloatsep}{5pt}
\setlength{\floatsep}{0pt}
\setlength{\abovecaptionskip}{2pt}
\setlength{\belowcaptionskip}{2pt}

\begin{abstract}
We define unbounded twisted complexes and bicomplexes
generalising the notion of a (bounded) twisted complex over a DG
category \cite{BondalKapranov-EnhancedTriangulatedCategories}. 
These need to be considered relative to another DG category $\B$ admitting 
countable direct sums and shifts. The resulting DG category of
unbounded twisted complexes has a fully faithful convolution functor into 
$\modB$ which filters through $\B$ if the latter admits change of
differential. As an application, we rewrite definitions of
$\Ainfty$-structures in terms of twisted complexes to make them work
in an arbitrary monoidal DG category or a DG bicategory. 
\end{abstract}

\maketitle

\section{Introduction}
\label{section-introduction}

The notion of a twisted complex of objects in a DG category was
introduced by Bondal and Kapranov 
\cite{BondalKapranov-EnhancedTriangulatedCategories}. It was used
as a tool to study and construct DG enhancements of triangulated categories.  
A one-sided twisted complex over a DG category $\A$ can be thought of 
as a lift to $\A$ of a bounded complex of objects in its homotopy category 
$H^0(\A)$. The lift includes the maps in the complex, the homotopies
up to which the consecutive maps compose to zero, and then the higher
homotopies. In addition to the complex of objects in
$H^0(\A)$, such data specifies a choice of its convolution together with a collection 
of Postnikov systems computing this convolution
\cite[\S2.4]{AnnoLogvinenko-BarCategoryOfModulesAndHomotopyAdjunctionForTensorFunctors}.  
Taking the category of one-sided twisted complexes over $\A$ 
is a DG realisation of taking the triangulated hull of $H^0(\A)$. 
Now twisted complexes are ubiquitous in working with 
DG categories and their modules
\cite{Keller-OnDifferentialGradedCategories}
\cite{Toen-LecturesOnDGCategories}\cite{LuntsOrlov-UniquenessOfEnhancementForTriangulatedCategories}\cite{AnnoLogvinenko-PFunctors}\cite{Barbacovi-OnTheCompositionOfTwoSphericalTwists}.

This paper generalises the notion of a twisted complex  
to include unbounded complexes. The authors came to need it, 
and expected the generalisation to be straightfoward. It 
turned out to involve numerous subtleties. The purpose of this short note 
is to write down these subtleties for the benefit of others. 
We also give the original application we had in mind: rewriting 
the definitions of $\Ainfty$-structures
\cite{Keller-IntroductionToAInfinityAlgebrasAndModules}\cite{Lefevre-SurLesAInftyCategories}\cite{Lyubashenko-CategoryOfAinftyCategories}
in terms of twisted complexes. This decouples them from the differential $m_1$ 
and allows them to work in an arbitrary monoidal DG category. 

We now describe our results in more detail. In 
\S\ref{section-preliminaries} we recall the original definition:
\begin{defn}[\cite{BondalKapranov-EnhancedTriangulatedCategories}]
\label{defn-intro-bounded-twisted-complexes} 
A \em twisted complex \rm   
over a DG category $\A$ comprises 
\begin{itemize}
\item $\forall\; i \in \mathbb{Z}$, an object $a_i$ of $\A$,
non-zero for only finite number of $i$, 
\item $\forall\; i,j \in \mathbb{Z}$, 
a degree $i - j + 1$ morphism $\alpha_{ij}\colon a_i \rightarrow a_j$ in $\A$,
\end{itemize}
satisfying \begin{equation}
\label{eqn-intro-the-twisted-complex-condition}
(-1)^j d\alpha_{ij} + \sum_k \alpha_{kj} \circ \alpha_{ik} = 0. 
\end{equation}
\end{defn}
The twisted complex condition should be thought of as follows. 
We have Yoneda embedding $\A \hookrightarrow \modA$. Consider 
the object  $\bigoplus a_i[-i]$ in $\modA$. The sum $\sum \alpha_{ij}$ 
is its degree $1$ endomorphism. Let $d_{nat}$ be the natural differential 
on $\bigoplus a_i[-i]$. The condition
\eqref{eqn-intro-the-twisted-complex-condition} is equivalent to 
$d_{nat} + \sum \alpha_{ij}$ squaring to zero. 

In other words, a twisted complex is the data which modifies
$d_{nat}$ to a new differential on $\bigoplus a_i[-i]$. The resulting
new object of $\modA$ is called the \em convolution \rm of the twisted
complex $(a_i, \alpha_{ij})$. In the special case of twisted complexes of 
form $a_0 \rightarrow a_1$ the convolution is simply the cone construction. 

Degree $n$ morphisms $(a_i, \alpha_{ij}) \rightarrow (b_i,
\beta_{ij})$ of twisted complexes are collections $\left\{ f_{ij}
\right\}$ of morphisms $f_{ij}\colon a_i \rightarrow b_j$ in $\A$ 
of degree $n + i - j$. Their composition and differentiation 
are defined so as to ensure that the convolution becomes 
a fully faithful embedding of the resulting DG category $\twcx \A$ into 
$\modA$, see Defn.~\ref{defn-bounded-twisted-complexes}. Indeed, the 
assignment of the module $\bigoplus a_i[-i]$ to a collection of
objects $\left\{ a_i \right\}$ determines the rest of the definitions
of twisted complexes and their morphisms. 

These definitions can be replicated for an infinite
collection $\left\{ a_i \right\}$ resulting in a ``naive'' notion 
of an unbounded twisted complex  $\left\{ a_i, \alpha_{ij} \right\}$, 
see Defn.~\ref{defn-unbounded-twisted-complexes-naive}. 
The sum $\sum \alpha_{ij}$ is now infinite and doesn't necessarily define 
a degree $1$ endomorphism of $\bigoplus a_i[-i]$ in $\modA$, so 
we impose this as an extra condition. It means that only 
a finite number $\alpha_{ij} \neq 0$ for any 
$i \in \mathbb{Z}$, and similarly for the components $f_{ij}$ 
of morphisms of twisted complexes. We again have
$\twcxub_{naive} \A \hookrightarrow \modA$.

However, often the ``naive'' category $\twcxub_{naive} \A$ 
is not what we want. Firstly,  
when $\A$ is not small the category $\modA$ isn't well-defined. The
definition of $\twcxub_{naive} \A$ is still valid, but to have 
the convolution functor we need to enlarge the universe to make $\A$ small. 
More importantly, even small $\A$ might admit countable shifted 
direct sums $\bigoplus a_i[-i]$ of its objects. 

The main subtlety is then that infinite direct sums, unlike
finite, \em do not commute with the Yoneda embedding 
$\A \hookrightarrow \modA$ \rm (Example
\ref{exmpl-yoneda-embedding-doesn't-preserve-direct-sums}). 
The direct sum $\bigoplus a_i[-i]$ 
assigned to a twisted complex $(a_i, \alpha_{ij})$ can thus be taken in $\modA$ 
or in $\A$. The former leads to the ``naive'' category
$\twcxub_{naive} \A$, while the latter to a strictly larger category 
$\twcxub_{\A} \A$ where infinite number of $\alpha_{ij}$ can be
non-zero for any $i \in \mathbb{Z}$ as long as $\sum \alpha_{ij}$ is still an
endomorphism of $\bigoplus a_i[-i]$ in $\A$. The difference
between $\twcxub_{naive} \A$ and $\twcxub_{\A} \A$ lies only
in unbounded twisted complexes. 
If $\A$ admits change of differential (Defn.~\ref{defn-dg-category-admits-change-of-differential}), the convolution functor takes values in $\A$. 
All these considerations apply when $\A = \modC$ for small $\C$
(Example \ref{exmpl-unbounded-twisted-complexes-over-modC}).

This motivates our \S\ref{section-twisted-complexes-unbounded-case} where 
we define unbounded twisted complexes relative to an embedding of $\A$ into
another DG category $\B$:
\begin{defn}[see Definition \ref{defn-unbounded-twisted-complexes-non-naive}] 
\label{defn-intro-unbounded-twisted-complexes-non-naive} 
Let $\A$ be a DG category with a fully faithful embedding into 
a DG category $\B$ which has countable direct sums and shifts. 

An \em unbounded twisted complex \rm over $\A$ relative to $\B$ consists of 
\begin{itemize}
\item $\forall\; i \in \mathbb{Z}$, an object $a_i$ of $\A$,
\item $\forall\; i,j \in \mathbb{Z}$, 
a degree $i - j + 1$ morphism $\alpha_{ij}\colon a_i \rightarrow a_j$ in $\A$,
\end{itemize}
satisfying 
\begin{itemize}
\item $\sum \alpha_{ij}$ is an endomorphism of $\bigoplus_{i \in
\mathbb{Z}} a_i[-i]$ in $\B$, 
\item The twisted complex condition \eqref{eqn-intro-the-twisted-complex-condition}. 
\end{itemize}

The \em DG category $\twcxub_\B(\A)$ of unbounded twisted complexes 
over $\A$ relative to $\B$ \rm is defined in the unique way which
yields fully faithful convolution functor  
\begin{equation}
\label{eqn-intro-convolution-functor-for-twcxub_B-A}
\twcxub_\B(\A) \rightarrow \modB, 
\end{equation}
which sends any $(a_i, \alpha_{ij})$ to $\bigoplus a_i[-i]$ with 
its natural differential modified by $\sum \alpha_{ij}$. 
\end{defn}

Any $\B$ as above admits change of differential 
if and only if it admits convolutions of unbounded twisted complexes.
In such case  for any 
$\A \subseteq \B$ the convolution functor 
\eqref{eqn-intro-convolution-functor-for-twcxub_B-A}
takes values in $\B$ (Lemma
\ref{lemma-admits-convolutions-of-unbounded-twisted-complexes}). We 
can thus use unbounded twisted complexes over non-small categories 
without running into set-theoretic issues.

In \S\ref{section-twisted-bicomplexes} we generalise twisted complexes 
in another direction and define a
\em twisted bicomplex \rm (Defn.~\ref{defn-a-twisted-bicomplex}) over
$\A$. These are bigraded twisted complexes. To work with the unbounded
ones, we again fix an embedding of $\A$ into a DG category $\B$. 
We denote the resulting category of unbounded twisted complexes
by $\twbicx_\B^{\pm}(\A)$. A twisted bicomplex is not a twisted complex 
of its rows or of its columns. It only becomes one after 
a sign twist. We write this down explicitly as a pair of functors 
$$ \cxrow, \cxcol\colon \ 
\twcx_\B^{\pm}\left(\twcx_\B^{\pm}\left(\A\right)\right) 
\rightarrow \twbicx_\B^{\pm}(\A). $$
We relate the images of these functors and show that both become isomorphisms 
if we only work with one-sided twisted complexes and bicomplexes
(Prop.~\ref{prps-functors-cxrow-and-cxcol}). 

Finally, in \S\ref{section-ainfty-structures-in-monoidal-dg-categories}
we give the main application we had in mind: 
to reformulate and generalise the definitions of $\Ainfty$-algebras and modules 
\cite[\S2]{Lefevre-SurLesAInftyCategories} in terms of twisted
complexes. This disposes with the necessity to work explicitly 
with the operation $m_1$ (the differential) and makes the definitions
work in an arbitrary DG monoidal category $\A$ 
(or, more generally, a DG bicategory). 

In \S\ref{section-ainfinity-structures-definitions} we give the
resulting definitions. They all ask for the bar/cobar constructions of 
the $\Ainfty$-operations to be a twisted complex. Since these
constructions involve infinite number of objects, we need 
the theory of unbounded twisted complexes. 
Now, in bar constructions there is only a finite number of 
arrows emerging from each element of the twisted complex. Hence, 
our definitions of an $\Ainfty$-algebra or an $\Ainfty$-module are 
independent of the ambient category $\B$ we use to define unbounded 
twisted complexes. In cobar constructions this is 
no longer the case and the choice of $\B$ matters. 
These definitions are studied further in
\cite{AnnoLogvinenko-AinfinityStructuresInDGMonoidalCategoriesAndStrongHomotopyUnitality}
whose \S3.2 explains at length how they generalise 
the classical definitions \cite[\S2]{Lefevre-SurLesAInftyCategories}
\cite{Lyubashenko-CategoryOfAinftyCategories}. 

In \S\ref{section-twisted-complexes-of-ainfty-modules} 
we look at twisted complexes of $\Ainfty$-modules. 
As per \S\ref{section-ainfinity-structures-definitions} 
let $\nodA$ be the category of $\Ainfty$-modules over an 
$\Ainfty$-algebra $A$ in a monoidal DG category $\A$.  
We define twisted complexes over $\nodA$ neither relative 
to $\modd\text{-}(\nodA)$ nor to $\nodA$. Instead, we embed $\A$
into a cocomplete closed monoidal DG category $\B$ with convolutions
of unbounded twisted complexes and define twisted complexes relative 
to $\nodrepA$, the category of $\Ainfty$-$A$-modules in $\B$. 
As $\nodrepA$ also admits convolutions of unbounded 
twisted complexes
(Cor.~\ref{cor-nod-A^B-has-convolutions-of-unbounded-twisted-complexes}),
convolutions of twisted complexes over $\nodA$ take values in $\nodrepA$. 
We can always set $\B = \modA$ with the induced monoidal structure
\cite[\S4.5]{GyengeKoppensteinerLogvinenko-TheHeisenbergCategoryOfACategory}.
However, we may need to choose differently 
e.g.  for $\Ainfty$-modules in a category of $\Ainfty$-modules. 

We then use the twisted bicomplex techniques we developed in
\S\ref{section-twisted-bicomplexes} to prove that a twisted complex 
of $\Ainfty$-modules defines an $\Ainfty$-module 
structure on the twisted complex of their underlying objects
in a way that gives a fully faithful embedding of the corresponding 
categories (Prps.~\ref{prps-twcxub-nodA-embeds-into-nod-twcxub-A}). 
It follows that the DG category $\nodA$ of $\Ainfty$-modules 
over an $\Ainfty$-algebra $A$ is pretriangulated (resp. admits
convolutions of unbounded twisted complexes) if and only if 
DG monoidal category $\A$ we work is (resp. does)
(Cor.~\ref{cor-noddinfA-strongly-pretriangulated-iff-A-is}). 
 
In the Appendix we describe a homotopy transfer of structure for 
$\Ainfty$-modules. 

We are aware of an alternative definition of the DG category 
of unbounded twisted complexes in
\cite{Genovese-TStructuresOnUnboundedTwistedComplexes}.  
It ignores the subtleties we consider by imposing no finiteness 
conditions on the differentials $\alpha_{ij}$ in twisted complexes and the
components $f_{ij}$ of their morphisms. The resulting category admits 
no convolution functor and is better suited to purposes different from
ours. 

\em Acknowledgements: \rm We would like to thank Sergey Arkhipov,
Alexander Efimov, and Dmitri Kaledin for useful discussions. 
The first author would like to thank Kansas State
University for providing a stimulating research environment while
working on this paper. The second author would like to offer similar
thanks to Cardiff University and to Max-Planck-Institut f{\"u}r
Mathematik Bonn. 

\section{Preliminaries}
\label{section-preliminaries}

\subsection{DG categories}
\label{section-dg-categories}

For a brief introduction to DG-categories, DG-modules and 
the technical notions involved we direct the reader to 
a survey in \cite{AnnoLogvinenko-SphericalDGFunctors}, \S2-4. 
Other nice sources are \cite{Keller-DerivingDGCategories},
\cite{Toen-TheHomotopyTheoryOfDGCategoriesAndDerivedMoritaTheory},
\cite{Toen-LecturesOnDGCategories}, and 
\cite{LuntsOrlov-UniquenessOfEnhancementForTriangulatedCategories}. 

We summarise the key notions relevant to this paper. Throughout 
the paper we work in a fixed universe $\mathbb{U}$ of sets 
containing an infinite set. We also fix the base field or commutative 
ring $k$ we work over. 

We define $\modk$ to be the category of $\mathbb{U}$-small complexes of 
$k$-modules. It is a cocomplete closed symmetric monoidal category
with monoidal operation $\otimes_k$ and unit $k$. A \em DG category \rm 
is a category enriched over $\modk$. In particular, any DG category 
is locally small. 

If a DG category $\A$ is small, we write $\modA$ for the DG category
of (right) $\A$-modules. These are functors $\Aopp \rightarrow \modk$, 
so $\modA = \DGFun(\Aopp,\modk)$. Note that if $\A$ is not small, then 
$\modA$ doesn't make sense. It isn't even a DG category in the above sense - 
its morphism spaces are no longer small and hence do not lie in $\modk$. 

We can always enlarge our universe $\mathbb{U}$ to a universe $\mathbb{V}$
where $\A$ is small. This enlarges $\modk$ and hence $\modA$ depends
on choice of $\mathbb{V}$. However, in this paper we only work with $\modA$ 
as a target for the convolution of twisted complexes over $\A$. For
these purposes, the choice of $\mathbb{V}$ doesn't matter - the only part
of $\modA$ we interact with are countable direct sums of shifts of
objects of $\A$ with modified differential. 

Thus, when $\A$ is not small, we mean by $\modA$ the module category
of $\A$ taken in any appropriate enlargement $\mathbb{V}$ of
$\mathbb{U}$. Moreover, the constructions in this paper, such as
that of the category of twisted complexes over $\A$ taken 
relative to a DG category $\B$, were devised precisely to enable 
us to replace $\modA$ with something more approriate when $\A$ is not
small. 

\subsection{Key isomorphism} 
\label{section-key-isomorphism-for-twisted-complexes}
Let $\A$ be a DG-category, let $E,F \in \modA$ and 
$i,j \in \mathbb{Z}$. The theory of twisted complexes
\cite{BondalKapranov-EnhancedTriangulatedCategories} which we
summarise in \S\ref{section-twisted-complexes-summary} depends 
crucially on the choice of an isomorphism  
\begin{equation}
\label{eqn-key-isomorphism-for-twisted-complexes}
\homm_\A(E,F)[j-i] \xrightarrow{\quad\sim\quad} \homm_\A(E[i],F[j]). 
\end{equation}

The simplest such isomorphism is:
\begin{defn}
Let $\A$ be a DG-category, let $E,F \in \modA$ and 
$i,j \in \mathbb{Z}$. Define the isomorphism of graded $k$-modules
\begin{equation}
\psi\colon \homm_\A(E,F)[j-i] \xrightarrow{\quad\sim\quad} \homm_\A(E[i],F[j]) 
\end{equation}
to be the map which sends any $f \in
\homm^p_\A(E,F)$ to itself considered as an element of
$\homm^{p-j+i}(E[i], F[j])$. In other words, forgetting the grading,
in every fiber over every $a \in \A$ the map $\psi(f)$ is the same map of
$k$-vector spaces as $f$. 
\end{defn}

Note that $\psi$ is \em not \rm compatible with the differentials:
$$ d_{\homm_\A(E[i],F[j])} \circ \psi = (-1)^i \psi \circ
d_{\homm_\A(E,F)[j-i]}. $$

There are at least two natural ways to fix this. Define
$$ \psi_1, \psi_2\colon \homm_\A(E,F)[j-i]
\xrightarrow{\quad\sim\quad} \homm_\A(E[i],F[j]) $$
to be the maps which send $f \in \homm^p_\A(E,F)$
to $(-1)^{ip} \psi(f)$ and $(-1)^{i(p - j + i)}\psi(f)$. The difference
between the two lies in whether we multiply $i$ by the degree of $f$
in $\homm_\A(E,F)$ or its degree in $\homm_\A(E,F)[j-i]$. 

Both $\psi_1$ and $\psi_2$ are isomorphisms of DG $k$-modules. 
However, they are incompatible with the composition. By this 
we mean the following: let $E,F,G \in \modA$ and $i,j,k \in
\mathbb{Z}$, then e.g. the isomorphism 
$$ \psi_1(E,i,G,k)\colon \homm_\A(E,G)[k-i]
\xrightarrow{\quad\sim\quad} \homm_\A(E[i],G[k]) $$
is not a composition of $\psi_1(E,i,F,j)$ and $\psi_1(F,j,G,k)$. 
On the other hand, $\psi$, while incompatible with
differentials, is compatible with composition. 

The theory of twisted complexes and its fundamental definitions depend
on the choice of an isomorphism 
\eqref{eqn-key-isomorphism-for-twisted-complexes}. The definition 
of the DG category $\twcx(\A)$ of twisted complexes over $\A$ 
is set up so as to ensure that there exists a fully faithful functor
$\twcx(\A) \hookrightarrow \modA$ called \em convolution\rm, 
cf.~\S\ref{section-twisted-complexes-summary}. This functor is defined 
using the isomorphism $\eqref{eqn-key-isomorphism-for-twisted-complexes}$, 
thus different choices would lead to different
formulas in the definition of $\twcx(\A)$. 

The incompatibility of $\psi$ with differentials introduces
in these formulas a simple sign to every appearance
of the differential $d_\A$ of $\A$, 
cf.~\eqref{eqn-the-twisted-complex-condition} and 
\eqref{eqn-the-hom-complex-of-twisted-complexes-differential}. 
On the other hand, the incompatibility of $\psi_1$ and $\psi_2$ 
with composition introduces into the same formulas 
a complicated sign to every composition of two
morphisms of $\A$.

We choose to use the graded module isomorphim $\psi$
to identify $\homm_\A(E,F)[j-i]$ with $\homm_\A(E[i],F[j])$ when 
defining twisted complexes. We fix this choice and use it implicitly
in the sections below.  

\subsection{Bounded twisted complexes}
\label{section-twisted-complexes-summary}

Here we summarise some known facts about the usual, bounded twisted 
complexes. This notion was originally introduced by Bondal and 
Kapranov in \cite{BondalKapranov-EnhancedTriangulatedCategories}:

\begin{defn}
\label{defn-bounded-twisted-complexes} 
A \em twisted complex \rm   
over a DG category $\A$ is a collection of 
\begin{itemize}
\item $\forall\; i \in \mathbb{Z}$, an object $a_i$ of $\A$,
non-zero for only finite number of $i$, 
\item $\forall\; i,j \in \mathbb{Z}$, 
a degree $i - j + 1$ morphism $\alpha_{ij}\colon a_i \rightarrow a_j$ in $\A$,
\end{itemize}
\begin{equation*}
\begin{tikzcd}[column sep = 2.3cm, ampersand replacement=\&]
a_0
\ar[loop, out=-45, in=-135, looseness=3]{}[description]{\alpha_{00}}
\ar[shift left=1.25]{r}[description]{\alpha_{01}}
\ar[bend left=30]{rr}[description]{\alpha_{02}}
\ar[bend left=35]{rrr}[description]{\alpha_{03}}
\ar[bend left=40]{rrrr}[description]{\alpha_{04}}
\&
a_1
\ar[loop, out=-45, in=-135, looseness=3]{}[description]{\alpha_{11}}
\ar[shift left=1.25]{l}[description]{\alpha_{10}}
\ar[shift left=1.25]{r}[description]{\alpha_{12}}
\ar[bend left=30]{rr}[description]{\alpha_{13}}
\ar[bend left=35]{rrr}[description]{\alpha_{14}}
\&
a_2
\ar[loop, out=-45, in=-135, looseness=3]{}[description]{\alpha_{22}}
\ar[bend left=30]{ll}[description]{\alpha_{20}}
\ar[shift left=1.25]{l}[description]{\alpha_{21}}
\ar[shift left=1.25]{r}[description]{\alpha_{23}}
\ar[bend left=30]{rr}[description]{\alpha_{24}}
\&
a_3
\ar[loop, out=-45, in=-135, looseness=3]{}[description]{\alpha_{33}}
\ar[bend left=40]{lll}[description]{\alpha_{30}}
\ar[bend left=30]{ll}[description]{\alpha_{31}}
\ar[shift left=1.25]{l}[description]{\alpha_{32}}
\ar[shift left=1.25]{r}[description]{\alpha_{34}}
\&
a_4
\ar[loop, out=-45, in=-135, looseness=3]{}[description]{\alpha_{44}}
\ar[bend left=40]{llll}[description]{\alpha_{40}}
\ar[bend left=35]{lll}[description]{\alpha_{41}}
\ar[bend left=30]{ll}[description]{\alpha_{42}}
\ar[shift left=1.25]{l}[description]{\alpha_{43}}
\end{tikzcd}
\end{equation*}
satisfying the condition
\begin{equation}
\label{eqn-the-twisted-complex-condition}
(-1)^j d\alpha_{ij} + \sum_k \alpha_{kj} \circ \alpha_{ik} = 0. 
\end{equation}

Define the \em DG category $\twcx(\A)$ of twisted complexes over $\A$ \rm  
by setting
\begin{equation}
\label{eqn-the-hom-complex-of-twisted-complexes}
\homm^\bullet_{\twcx(\A)}\bigl((a_i, \alpha_{ij}),(b_i,
\beta_{ij})\bigr) := \bigoplus_{q,k,l \in \mathbb{Z}} \homm^q_{\A}(a_k, b_l)
\end{equation}
where each  $f \in  \homm^q_{\A}(a_k, b_l)$ has degree $q + l - k$ and 
\begin{equation}
\label{eqn-the-hom-complex-of-twisted-complexes-differential}
 df := (-1)^l d_{\A} f + \sum_{m \in
\mathbb{Z}}\left( \beta_{lm} \circ f - (-1)^{q + l -k} f 
\circ \alpha_{mk} \right), 
\end{equation}
where $d_\A$ is the differential on morphisms in $\A$.
\end{defn}

This definition ensures that $\twcx(\A)$ is 
isomorphic to the full subcategory of $\modA$ consisting of 
the DG $\A$-modules whose underlying graded modules are of form 
$\oplus_{i \in \mathbb{Z}} a_i[-i]$ with only finite number
of $a_i \in \A$ non-zero. Indeed:
\begin{itemize}
\item The twisted complex condition \eqref{eqn-the-twisted-complex-condition}
is equivalent to $d_{nat} + \sum_{i,j} \alpha_{ij}$ being another
differential on $\oplus_{i \in \mathbb{Z}} a_i[-i]$. Here $d_{nat}$ is
its natural differential. 
\item The $\homm$-complex 
\eqref{eqn-the-hom-complex-of-twisted-complexes-differential}
is defined to have the same underlying graded $k$-module as
\begin{equation}
\label{eqn-the-hom-complex-of-convolutions-in-mod-A}
\homm^\bullet_{\modA}\left(\bigoplus_{k \in \mathbb{Z}}
a_k[-k], \bigoplus_{l \in \mathbb{Z}} b_l[-l]\right)
= 
\bigoplus_{k,l \in \mathbb{Z}} \homm^{\bullet - l + k}_{\A} 
\left(a_k, b_l\right).
\end{equation}
and the differential
$\eqref{eqn-the-hom-complex-of-twisted-complexes-differential}$ 
is defined so as to coincide under this identification
with the one obtained on 
\eqref{eqn-the-hom-complex-of-convolutions-in-mod-A}
by endowing the two direct sums with their new differentials. 
\end{itemize}

We thus have a fully faithful \em convolution functor \rm 
$$ \conv\colon\; \twcx(\A) \hookrightarrow \modA $$
which sends each $(a_i, \alpha_{ij})$ to the $\A$-module $\bigoplus a_i[-i]$
equipped with the new differential $d_{nat} + \sum \alpha_{ij}$. Note, that 
the existence of this functor can be used as the definition of 
the category $\twcx(\A)$ once one fixes the assignment of the graded 
module $\bigoplus a_i[-i]$ to any collection $\left\{ a_i \right\}_{i
\in \mathbb{Z}}$. 

A twisted complex is called \em one-sided \rm if $\alpha_{ij} = 0$ for 
all $i \geq j$. 
\begin{equation}
\begin{tikzcd}[column sep = 2.4cm, ampersand replacement=\&]
a_0
\ar{r}[description]{\alpha_{01}}
\ar[bend left=20]{rr}[description]{\alpha_{02}}
\ar[bend left=25]{rrr}[description]{\alpha_{03}}
\ar[bend left=30]{rrrr}[description]{\alpha_{04}}
\&
a_1
\ar{r}[description]{\alpha_{12}}
\ar[bend left=20]{rr}[description]{\alpha_{13}}
\ar[bend left=25]{rrr}[description]{\alpha_{14}}
\&
a_2
\ar{r}[description]{\alpha_{23}}
\ar[bend left=20]{rr}[description]{\alpha_{24}}
\&
a_3
\ar{r}[description]{\alpha_{34}}
\&
a_4
\end{tikzcd}
\end{equation}
If $(a_i, \alpha_{ij})$ is a one-sided twisted complex over $\A$, then $(a_i,
\alpha_{ij})$ is a (usual) complex over $H^0(\A)$. Thus one-sided
twisted complexes can be considered as homotopy lifts to $\A$ of usual 
complexes in $H^0(\A)$. The full subcategory of $\twcx(\A)$ consisting
of one-sided twisted complexes is called the \em pretriangulated
hull \rm of $\A$ and is denoted $\pretriag(\A)$. We say that a DG
category is \em pretriangulated \rm (resp. \em strongly
pretriangulated\rm) if the natural embedding $\A \hookrightarrow
\pretriag(\A)$ is a quasi-equivalence (resp. equivalence). 

The reason for the term ``pretriangulated'' is that 
$H^0(\pretriag(\A))$ is the triangulated hull of $H^0(\A)$ in $H^0(\modA)$, 
or indeed any $H^0(\B)$ for any fully faithful embedding of $\A$ into a
pretriangulated DG category $\B$. 

\section{Unbounded twisted complexes}
\label{section-twisted-complexes-unbounded-case}

In this section we generalise the notions in 
\S\ref{section-twisted-complexes-summary} to
unbounded twisted complexes. The generalisation
seems straightforward, but there are subtleties 
regarding infinite direct sums. Unlike finite direct sums, 
these are not preserved by all DG-functors. In particular, 
\underline{they are not preserved by the Yoneda embedding}
$$ \Upsilon\colon \A \hookrightarrow \modA, \quad \quad a \mapsto
\homm_A(-,a) \quad \forall\; a \in \A $$ 
which we used implicitly in defining the convolution of a twisted
complex. 

\begin{exmpl}
\label{exmpl-yoneda-embedding-doesn't-preserve-direct-sums}
Let $\left\{ a_i \right\}_{i \in \mathbb{Z}}$ be objects in $\A$
such that $\bigoplus_{i \in \mathbb{Z}} a_i$ exists in $\A$. Then 
\begin{align*}
\bigoplus_{i \in \mathbb{Z}} \Upsilon(a_i) &= \bigoplus_{i \in \mathbb{Z}}
\homm_{\A}(-,a_i), \\
\Upsilon\left(\bigoplus_{i \in \mathbb{Z}} a_i\right)  &= 
\homm_{\A}(-,\bigoplus_{i \in \mathbb{Z}} a_i), 
\end{align*}
are two different $\A$-modules, with the former being a strict submodule 
of the latter. Let $b \in \A$, the morphisms from $\Upsilon(b)$ 
to the former module are the finite sums of $b \rightarrow a_i$.  
On the other hand, the morphisms from $\Upsilon(b)$ to the latter 
are the morphisms from $b$ to $\bigoplus_{i \in \mathbb{Z}}
a_i$, which includes some infinite sums of $b \rightarrow a_i$. 
In particular, if $b = \bigoplus_{i \in \mathbb{Z}} a_i$, 
then $\id_b$ is the infinite sum of $\id_{a_i}$. 
\end{exmpl}

To define an unbounded twisted complex of objects 
$\left\{ a_i \right\}_{i \in \mathbb{Z}}$ of $\A$, we need  
to choose in which category we take the infinite direct sum 
$\bigoplus_{i \in \mathbb{Z}} a_i[-i]$. We can always do it in $\modA$.
Then, proceeding as before, we arrive at the following definition. In it, we 
allow infinite number of non-zero objects $a_i$, but then,  
both for twisted differentials and for the morphisms of twisted complexes,
we disallow an infinite number of non-zero maps to emerge from any 
one object $a_i$:

\begin{defn}[``Naive'' version]
\label{defn-unbounded-twisted-complexes-naive} 

An \em unbounded twisted complex \rm   
over a DG category $\A$ consists of 
\begin{itemize}
\item $\forall\; i \in \mathbb{Z}$, an object $a_i$ of $\A$,
\item $\forall\; i,j \in \mathbb{Z}$, 
a degree $i - j + 1$ morphism $\alpha_{ij}\colon a_i \rightarrow a_j$ in $\A$,
\end{itemize}
satisfying 
\begin{itemize}
\item For any $i \in \mathbb{Z}$ only finite number of $\alpha_{ij}$ 
are non-zero, 
\item The twisted complex condition \eqref{eqn-the-twisted-complex-condition}. 
\end{itemize}

Define \em DG category $\twcxub_{naive}(\A)$ of unbounded twisted complexes 
over $\A$ \rm by 
\begin{equation}
\label{eqn-the-hom-complex-of-unbounded-twisted-complexes-naive}
\homm^\bullet_{\twcxub_{naive}(\A)}\bigl((a_i, \alpha_{ij}),(b_i,
\beta_{ij})\bigr) := \bigoplus_{q \in \mathbb{Z}} \prod_{k \in
\mathbb{Z}}\bigoplus_{l \in \mathbb{Z}} \homm^q_{\A}(a_k, b_l)
\end{equation}
where the degree of $\homm^q_{\A}(a_k, b_l)$ is $q + l - k$ and 
the differential is defined by
\eqref{eqn-the-hom-complex-of-twisted-complexes-differential}.
\end{defn}

As before, this results in the fully faithful convolution functor
$$ \conv \colon \twcxub_{naive}(\A) \hookrightarrow \modA. $$ 
Apriori, this is the only definition we can make for an arbitrary
DG category $\A$. Indeed, unless specifically mentioned otherwise, 
we write $\twcxub(\A)$ for $\twcxub_{naive}(\A)$. 

However, in some cases it is useful to define $\twcxub(\A)$ to be
bigger than $\twcxub_{naive}(\A)$: 
\begin{exmpl}
\label{exmpl-unbounded-twisted-complexes-over-modC}
Let $\A = \modC$ for some small DG category $\C$. Assign to a collection 
$\left\{ a_i \right\}_{i \in \mathbb{Z}}$ the representable 
$\A$-module $\Upsilon(\bigoplus_{i \in \mathbb{Z}} a_i[-i])$,
instead of the non-representable $\A$-module 
$\bigoplus_{i \in \mathbb{Z}} \Upsilon(a_i[-i])$. 
This yields the definition of $\twcxub(\A)$ which is analogous 
to the naive one above, except we do allow infinite number of 
twisted differentials $\alpha_{ij}$
to emerge from a single object $a_i$ as long as $\sum \alpha_{ij}$
defines an endomorphism of $\bigoplus_{i \in \mathbb{Z}} a_i[-i]$ in $\A$, 
and similarly for morphisms of twisted complexes. 
As before, this definition ensures that we have 
the fully faithful convolution functor $\twcxub(\A) \hookrightarrow \modA$.  
However, since $\A = \modC$ is closed under the change of differential,
this convolution filters through the Yoneda embedding. Thus we have 
the fully faithful functor
$$\conv\colon\; \twcxub(\A) \rightarrow \A. $$
In fact, it is an equivalence, since it has a right
inverse - the tautological embedding $\A \hookrightarrow \twcxub(\A)$
which sends any $a \in \A$ to itself considered as a trivial twisted
complex concentrated in degree zero. We thus see that $\A = \modC$ is
closed under convolutions of all unbounded twisted complexes in
$\twcxub(\A)$.  
\end{exmpl}

Finally, even when $\A$ does not admit all small direct sums, 
there might still be a better category to take these in than $\modA$. 
For example, $\A$ might be a full subcategory of some $\modC$ containing 
some infinite direct sums, but not all of them. Another example, 
which indeed motivated these considerations, can be found in 
\S\ref{section-twisted-complexes-of-ainfty-modules}. 
We thus define the following:
\begin{defn}
\label{defn-unbounded-twisted-complexes-non-naive} 
Let $\A$ be a DG category with a fully faithful embedding into 
a DG category $\B$ which has countable direct sums and shifts. 

An \em unbounded twisted complex \rm over $\A$ relative to $\B$ consists of 
\begin{itemize}
\item $\forall\; i \in \mathbb{Z}$, an object $a_i$ of $\A$,
\item $\forall\; i,j \in \mathbb{Z}$, 
a degree $i - j + 1$ morphism $\alpha_{ij}\colon a_i \rightarrow a_j$ in $\A$,
\end{itemize}
satisfying 
\begin{itemize}
\item $\sum \alpha_{ij}$ is an endomorphism of $\bigoplus_{i \in
\mathbb{Z}} a_i[-i]$ in $\B$, 
\item The twisted complex condition \eqref{eqn-the-twisted-complex-condition}. 
\end{itemize}

Define \em DG category $\twcx^{\pm}_\B(\A)$ of unbounded twisted complexes 
over $\A$ relative to $\B$ \rm by setting
\begin{equation}
\label{eqn-the-hom-complex-of-unbounded-twisted-complexes-non-naive}
\homm^\bullet_{\twcx^{\pm}_{\B}(\A)}\bigl((a_i, \alpha_{ij}),(b_i,
\beta_{ij})\bigr) := \homm^\bullet_{\B}(\bigoplus_{k \in \mathbb{Z}} a_k[-k],
 \bigoplus_{l \in \mathbb{Z}} b_l[-l]) 
\end{equation}
wih its natural grading and the differential defined by
\eqref{eqn-the-hom-complex-of-twisted-complexes-differential}.
\end{defn}

Where the choice of $\B$ is clear or was fixed, we shall write 
$\twcxub(\A)$ for $\twcx^{\pm}_\B(\A)$. 

As before, our definition ensures that we have a fully faithful
convolution functor 
$$ \conv\colon\; \twcx^{\pm}_{\B}(\A) \rightarrow \modB. $$
We have the commutative square of fully faithful embeddings
\begin{equation}
\begin{tikzcd}
\A 
\ar[hookrightarrow]{r}{I} 
\ar{d}[hookrightarrow]{\Upsilon}
&
\B 
\ar{d}[hookrightarrow]{\Upsilon}
\\
\modA
\ar[hookrightarrow]{r}{I^*}
&
\modB.
\end{tikzcd}
\end{equation}
Since all DG-functors preserve finite direct sums, we see that
on bounded twisted complexes the convolution functor into $\modB$
is simply the composition of the usual convolution functor into
$\modA$ and $I^*$.

Observer that setting $\B = \modA$ recovers the definition of 
$\twcx^{\pm}_{naive}(\A)$ with the convolution into $\modA$. 
On the other hand, when we have $\A = \modC$ for some small DG-category $\C$, 
setting $\B = \A$ recovers the category constructed in Example
\ref{exmpl-unbounded-twisted-complexes-over-modC} with its
convolution into $\modA$ which filters through $\A$. 

\begin{defn}
\label{defn-dg-category-admits-change-of-differential}
A DG category $\B$ \em admits 
change of differential \rm if for all $b \in \B$ and $f \in
\homm^1_\B(b,b)$ with $df + f^2 = 0$ the module in $\modB$
which has the underlying graded module of $\homm_\B(-,b)$
and the differential $d_{\homm_\B(-,b)} + f$ is representable. 
\end{defn}
\begin{defn}
A DG category $\B$ \em admits
convolutions of unbounded twisted complexes \rm if it
admits countable direct sums and shifts and the convolution 
functor $\twcx^{\pm}_\B(\B) \hookrightarrow \modB$ filters 
through $\B \hookrightarrow \modB$. 
\end{defn}

We do not need to specify which unbounded twisted complexes
$\B$ admits convolutions of, because for the convolution to be
representable the infinite direct sum needs to be taken in $\B$ itself. 
Thus we need to consider unbounded twisted complexes relative to $\B$
itself. 

If $\B$ admits convolutions of unbounded twisted complexes,
then the convolution  
$$ \twcxub_\B(\B) \hookrightarrow \B $$
is necessarily an equivalence. It is fully faithful 
and has a right inverse which sends any $b \in \B$ to itself
considered as trivial twisted complex in degree zero. 

\begin{lemma}
\label{lemma-admits-convolutions-of-unbounded-twisted-complexes}
Let $\B$ be a DG-category which admits countable direct sums and
shifts. The following are equivalent:
\begin{enumerate}
\item 
\label{item-B-admits-countable-direct-sums-shifts-and-change-of-differential}
$\B$ admits change of differential. 
\item 
\label{item-B-admits-convolutions-of-unbounded-twisted-complexes}
$\B$ admits convolutions of unbounded twisted complexes. 
\item 
\label{item-B-to-tw_B_B-is-equivalence}
The embedding $\B \hookrightarrow \twcxub_\B(\B)$
which sends any $b \in \B$ to itself
considered as a trivial twisted complex in degree zero is an
equivalence. 
\item 
\label{item-for-any-A-tw_B-A-convolves-to-B}
For any DG-category $\A$ with an embedding into $\B$, 
$\twcx^{\pm}_\B(\A) \rightarrow \modB$ filters through 
$\B \hookrightarrow \modB$.
\end{enumerate}
\end{lemma}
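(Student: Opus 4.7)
The plan is to prove the cycle (2)$\Rightarrow$(3)$\Rightarrow$(2), the equivalence (1)$\Leftrightarrow$(2), and then (2)$\Leftrightarrow$(4), using each time the description of the convolution functor as ``take $\bigoplus a_i[-i]$ and modify the differential by $\sum \alpha_{ij}$''. The driving observation is that an unbounded twisted complex over $\B$ is essentially the same data as a change-of-differential datum on $\bigoplus_{i\in\mathbb{Z}} a_i[-i] \in \B$: the condition that $\sum\alpha_{ij}$ is an endomorphism of this direct sum in $\B$ matches the existence hypothesis for $f$ in Defn.~\ref{defn-dg-category-admits-change-of-differential}, and the twisted complex condition \eqref{eqn-the-twisted-complex-condition} rewrites exactly as $d f + f^2 = 0$ for $f = \sum \alpha_{ij}$.

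First I would dispatch (2)$\Leftrightarrow$(3). This was essentially observed in the text just after Defn.~\ref{defn-dg-category-admits-change-of-differential}: the tautological embedding $\B \hookrightarrow \twcxub_\B(\B)$ is always a fully faithful right inverse to the convolution functor on its image, so (2) forces the convolution into $\B$ to be an equivalence and hence the embedding to be one; conversely, if the embedding is an equivalence, then for every $(a_i,\alpha_{ij})$ the image under $\twcxub_\B(\B)\to\modB$ is isomorphic in $\modB$ to a representable, which is the content of (2).

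For (2)$\Rightarrow$(1), I would specialise to the single-object unbounded twisted complex: given $(b, f)$ with $f\in\homm^1_\B(b,b)$ and $df+f^2 = 0$, take $a_0 := b$, $a_i := 0$ for $i\neq 0$, and $\alpha_{00}:=f$. The twisted complex condition reduces to $df+f^2 = 0$, and the convolution in $\modB$ is exactly the module $\homm_\B(-,b)$ with differential $d+f$. By (2), this is representable, i.e.\ $\B$ admits change of differential. For the converse (1)$\Rightarrow$(2), given an arbitrary unbounded twisted complex $(a_i,\alpha_{ij})$ in $\twcxub_\B(\B)$, set $b := \bigoplus_{i\in\mathbb{Z}}a_i[-i]$ in $\B$ (available by the countable direct sums and shifts assumption) and $f := \sum\alpha_{ij}\in\homm^1_\B(b,b)$ (an endomorphism in $\B$ by hypothesis). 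The twisted complex condition then reads $df+f^2=0$, and by (1) the module in $\modB$ with underlying graded module $\homm_\B(-,b)$ and differential $d+f$ is representable, so the convolution of $(a_i,\alpha_{ij})$ lies in $\B$.

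For (2)$\Leftrightarrow$(4), the direction (4)$\Rightarrow$(2) is trivial by taking $\A = \B$. For (2)$\Rightarrow$(4), I would use the natural restriction map $\twcx^{\pm}_\B(\A)\to \twcx^{\pm}_\B(\B)$ induced by the embedding $\A\hookrightarrow\B$: under it, a twisted complex over $\A$ goes to a twisted complex over $\B$ with the same underlying graded module and twisted differential, so the two convolutions into $\modB$ agree; by (2) the latter is representable in $\B$. The main obstacle, such as it is, is (1)$\Rightarrow$(2): one must check that the representable object produced by change-of-differential applied to $(\bigoplus a_i[-i],\sum\alpha_{ij})$ genuinely represents the convolution of $(a_i,\alpha_{ij})$, which reduces to verifying that the sign convention in \eqref{eqn-the-twisted-complex-condition} and the one in Defn.~\ref{defn-dg-category-admits-change-of-differential} match, and that the identification \eqref{eqn-the-hom-complex-of-convolutions-in-mod-A} carries the differential \eqref{eqn-the-hom-complex-of-twisted-complexes-differential} to the differential induced from $d + f$.
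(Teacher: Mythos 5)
Your proposal is correct and follows essentially the same route as the paper: the single-object twisted complex for $(2)\Rightarrow(1)$, the tautological embedding as a (quasi-)right-inverse for $(2)\Leftrightarrow(3)$, the commutative triangle/restriction map for $(2)\Leftrightarrow(4)$, and for $(1)\Rightarrow(2)$ the direct identification of the convolution of $(a_i,\alpha_{ij})$ with the change-of-differential datum on $\bigoplus a_i[-i]$ in $\B$ — which the paper handles by pointing to Example~\ref{exmpl-unbounded-twisted-complexes-over-modC} rather than spelling it out. The sign-matching worry you flag at the end is already absorbed by the paper's choice of the identification $\psi$ in \S\ref{section-key-isomorphism-for-twisted-complexes}, so it does not constitute an additional obstacle.
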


\begin{proof}
\underline{
$\eqref{item-B-admits-countable-direct-sums-shifts-and-change-of-differential}
\Rightarrow
\eqref{item-B-admits-convolutions-of-unbounded-twisted-complexes}$}:
This is the same argument as in 
Example \ref{exmpl-unbounded-twisted-complexes-over-modC}. 

\underline{$\eqref{item-B-admits-convolutions-of-unbounded-twisted-complexes}
\Leftrightarrow 
\eqref{item-B-to-tw_B_B-is-equivalence}$}:
The composition 
$$ \B \hookrightarrow \twcxub_\B(\B) \hookrightarrow \modB $$
is the Yoneda embedding. Thus $\twcxub_\B(\B) \hookrightarrow \modB$
filters through the Yoneda embedding if and only if 
$\B \hookrightarrow \twcxub_\B(\B)$ admits a right quasi-inverse. A fully
faithful functor admits a right quasi-inverse if and only if it is an
equivalence. 

\underline{
$\eqref{item-B-admits-convolutions-of-unbounded-twisted-complexes}
\Leftrightarrow 
\eqref{item-for-any-A-tw_B-A-convolves-to-B}$}:

The ``if'' implication is obvious. The ``only if' 
one results from the following commutative
triangle of fully faithful functors: 
\begin{equation*}
\begin{tikzcd}
\twcxub_\B(\A)
\ar[hookrightarrow]{d}
\ar[hookrightarrow]{rr}{\conv}
& &
\modB.
\\
\twcxub_\B(\B)
\ar[hookrightarrow]{urr}[']{\conv}
& &
\end{tikzcd}
\end{equation*}

\underline{
$\eqref{item-B-admits-convolutions-of-unbounded-twisted-complexes}
\Rightarrow 
\eqref{item-B-admits-countable-direct-sums-shifts-and-change-of-differential}$}:

Let $b \in \B$ and $f \in \homm_\B^{1}(b,b)$ with $df + f^2 = 0$. 
Then the complex consisting of $b$ in degree $0$ with a single differential $f$ 
from $b$ to itself is a twisted complex. Its convolution in $\modB$
has the same graded module as $b$ and the differential $d_b + f$.
Since $\B$ admits convolutions of twisted complexes, it is representable. 
\end{proof}

Finally, for any version of $\twcxub(\A)$, we define $\twcxpls(\A)$
and $\twcxmns(\A)$ to be its full subcategories consisting of all
bounded above twisted complexes and all bounded below twisted
complexes, respectively. We also define $\pretriagub(\A)$,
$\pretriagpls(\A)$, and $\pretriagmns(\A)$ to be the full
subcategories of $\twcxub(\A)$, $\twcxpls(\A)$, and $\twcxmns(\A)$
consisting of one-sided twisted complexes. 

\section{Twisted bicomplexes}
\label{section-twisted-bicomplexes}

The following is a natural generalisation of the notion of a twisted
complex:
\begin{defn}
\label{defn-a-twisted-bicomplex}
A \em twisted bicomplex \rm $(a_{ij}, \alpha_{ijkl})$ over a DG category $\A$ 
comprises 
\begin{itemize}
\item $\forall\; i,j \in \mathbb{Z}$, an object $a_{ij}$ of $\A$,
non-zero for only finite number of pairs $(i,j)$, 
\item $\forall\; i,j,k,l \in \mathbb{Z}$, 
a degree $(i+j) - (j+k) + 1$ morphism 
$\alpha_{ijkl}\colon a_{ij} \rightarrow a_{kl}$ in $\A$,
\end{itemize}
satisfying
\begin{equation}
\label{eqn-the-twisted-bicomplex-condition}
(-1)^{k+l} d\alpha_{ijkl} + \sum_{m,n} \alpha_{ijmn} \circ \alpha_{mnkl} = 0. 
\end{equation}

Define the \em DG category $\twbicx(\A)$ of twisted bicomplexes over $\A$ \rm  
by setting
\begin{equation}
\label{eqn-the-hom-complex-of-twisted-bicomplexes}
\homm^\bullet_{\twcx(\A)}\bigl((a_{ij}, \alpha_{ijkl}),(b_{ij},
\beta_{ijkl})\bigr) := \bigoplus_{q,k,l,m,n \in \mathbb{Z}} 
\homm^q_{\A}(a_{kl}, b_{mn})
\end{equation}
where each  $f \in  \homm^q_{\A}(a_{kl}, b_{mn})$ has degree $q +
(m+n) - (k+l)$ and 
\begin{equation}
\label{eqn-the-hom-complex-of-twisted-bicomplexes-differential}
 df := (-1)^{m+n} d_{\A} f + \sum_{p,q \in
\mathbb{Z}}\left( \beta_{mnpq} \circ f - (-1)^{q + (m+n) - (k+l)} f 
\circ \alpha_{pqkl} \right), 
\end{equation}
where $d_\A$ is the differential on morphisms in $\A$.

We think of indices $i$ and $j$ of each $a_{ij}$ as the row index and
the column index, respectively. 
We say that a twisted bicomplex is \em horizontally one-sided \rm
(resp.~\em vertically one-sided\rm) if $\alpha_{ijkl} = 0$ when $l \leq j$ 
(resp.~$k \leq i$). We say that a twisted bicomplex is \em
one-sided \rm if it is both vertically and horizontally one-sided. 
\end{defn}

The ``naive'' categories $\twbicxub(\A)$, $\twbicxpls(\A)$ and $\twbicxmns(\A)$
of unbounded, unbounded below, and unbounded above twisted bicomplexes,
as well as their versions relative to another DG category $\B$ are
defined similarly to the way they are defined in 
\S\ref{section-twisted-complexes-unbounded-case} for twisted complexes.  
We also use $\twbicx^{vos}$, $\twbicx^{hos}$ and $\twbios$ 
to denote the full subcategories consisting of one-sided twisted bicomplexes. 

Finally, we note that the category of twisted
bicomplexes is naturally isomorphic to the category of twisted
complexes of twisted complexes, but in two different ways:
the complex of sign-twisted rows and the complex of sign-twisted
columns. For this result, the twisted complexes over $\A$ 
need to be considered relative to some $\B$ 
which admits the convolutions of unbounded twisted complexes, cf. Lemma 
\ref{lemma-admits-convolutions-of-unbounded-twisted-complexes}. 
This is always true when $\B = \modC$ for some DG-category $\C$. 

\begin{defn}
\label{defn-the-functor-of-rows-and-the-functor-of-columns}
Let $\A$ be DG category and fix its embedding into a DG-category $\B$ 
which admits convolutions of unbounded twisted complexes. Define 
\begin{equation}
\cxrow\colon 
\twcx_\B^{\pm}\left(\twcx_\B^{\pm}\left(\A\right)\right) 
\rightarrow 
\twbicx_\B^{\pm}(\A), 
\end{equation}
\begin{equation}
\cxcol\colon 
\twcx_\B^{\pm}\left(\twcx_\B^{\pm}\left(\A\right)\right) 
\rightarrow 
\twbicx_\B^{\pm}(\A), 
\end{equation}
as follows. Let $(E_i, \alpha_{ik})$ be an object of  
$\twcx_\B^{\pm}\left(\twcx_\B^{\pm}\left(\A\right)\right)$. Write 
\begin{itemize}
\item $E_{i,j}$ for the objects of each $E_i$ and
$\alpha_{i,jl}: E_{i,j} \rightarrow E_{i,l}$ for its differentials. 
\item $\alpha_{ik,jl}: E_{i,j} \rightarrow E_{k,l}$ for the components 
of $\alpha_{ik}\colon E_i \rightarrow E_k$. 
\end{itemize} 
We then define: 
\begin{itemize}
\item $\cxrow(E_i, \alpha_{ik})$ to be the twisted bicomplex
whose $ij$-th object is $E_{i,j}$ and whose $ijkl$-th differential 
is $\alpha_{ik,jl}$ if $i \neq k$ and 
$\alpha_{ii,jl} + (-1)^i \alpha_{i,jl}$ if $i = k$. 
\item $\cxcol(E_i, \alpha_{ik})$ to be the twisted bicomplex
whose $ij$-th object is $E_{j,i}$ and whose $ijkl$-th differential 
is $\alpha_{jl,ik}$ if $j \neq l$ and $\alpha_{jj,ik} +
(-1)^j \alpha_{j,ik}$ if $j = l$. 
\end{itemize}

Similarly, let $f: (E_i, \alpha_{ik}) \rightarrow (F_i, \beta_{ik})$
be a morphism in $\twcx_\B^{\pm}\left(\twcx_\B^{\pm}\left(\A\right)\right)$. 
Write $f_{ik}$ for its $E_i \rightarrow F_k$ component, and then
$f_{ik,jl}$ for $E_{i,j} \rightarrow F_{k,l}$ component of that. We
then define:
\begin{itemize}
\item $\cxrow(f)$ to be the bicomplex map whose $ijkl$-th component is
$f_{ik,jl}$,
\item $\cxcol(f)$ to be the bicomplex map whose $ijkl$-th component is
$f_{jl,ik}$. 
\end{itemize}
\end{defn}

In other words, $\cxrow(E_i, \alpha_{ik})$ is
the bicomplex whose $i$-th row is the twisted complex $(-1)^i E_i$ 
to whose differentials we further add all the components of
$\alpha_{ii}$. The differentials between $i$-th and $j$-th rows for
$i \neq j$ are the components of $\alpha_{ij}$. Similarly, 
$\cxcol(E_i, \alpha_{ik})$ is the bicomplex whose columns are $(-1)^i E_i$ 
modified by $\alpha_{ii}$ and whose intercolumn differentials are
$\alpha_{ij}$. On morphisms, both functors simply map a morphism 
of complexes of complexes to the bicomplex morphism with the
same components. 

By our assumption on $\B$, the convolution functor embeds
$\twcx_\B^{\pm}\left(\A\right)$ fully faithfully into $\B$. 
We thus have a double convolution functor:
\begin{equation}
\label{eqn-double-convolution-of-complexes-of-complexes} 
\conv\conv\colon
\twcx_\B^{\pm}\left(\twcx_\B^{\pm}\left(\A\right)\right)
\hookrightarrow
\B. 
\end{equation}

\begin{prps}
\label{prps-functors-cxrow-and-cxcol}
Let $\A$ be a DG category with a fully faithful functor into a DG category $\B$ 
which admits convolutions of unbounded twisted complexes. 
Let $(E_i, \alpha_{ik}) \in
\twcx_\B^{\pm}\left(\twcx_\B^{\pm}\left(\A\right)\right)$. Then: 
\begin{enumerate}
\item Functors $\cxrow$ and $\cxcol$ in 
Defn.~\ref{defn-the-functor-of-rows-and-the-functor-of-columns}
are well-defined. The data they assign to an object of 
$\twcx_\B^{\pm}\left(\twcx_\B^{\pm}\left(\A\right)\right)$
satisfies the twisted bicomplex condition 
\eqref{eqn-the-twisted-bicomplex-condition} and the finiteness
condition of the sum of its differentials being a morphism in $\B$. 
The data they assign to a morphism satisfies the finiteness 
condition of the sum of its components being a morphism in $\B$. 
\item The following diagram commutes:
\begin{equation}
\begin{tikzcd}[column sep = 2.5cm]
\twcx_\B^{\pm}\left(\twcx_\B^{\pm}\left(\A\right)\right) 
\ar{r}{\conv\conv}
\ar{d}[']{\cxrow \text{ or } \cxcol} 
&
\B. 
\\
\twbicx_\B^{\pm}(\A)
\ar{ur}[']{\conv}
&
\end{tikzcd}
\end{equation}
\item The following diagram commutes:
\begin{equation}
\begin{tikzcd}[column sep = 2.5cm]
\twcx_\B^{\pm}\left(\twcx_\B^{\pm}\left(\A\right)\right) 
\ar{r}{\cxrow}
\ar{d}[']{\cxcol} 
&
\twbicx_\B^{\pm}(\A).
\\
\twbicx_\B^{\pm}(\A)
\ar{ur}[']{\text{Reflect}}
&
\end{tikzcd}
\end{equation}
Here $\text{Reflect}$ is a self-inverse automorphism of $\twbicx_\B^{\pm}(\A)$
which reflects it along the diagonal: a bicomplex $(E_{ij}, \alpha_{ijkl})$
is sent to $(E_{ji}, (-1)^{\delta_{ik}+\delta_{jl}} \alpha_{jilk})$, 
while a morphism $(f_{ijkl})$ is sent to $(f_{jilk})$. 

\item Functor $\cxrow$ restricts to the isomorphism 
$$ \cxrow\colon 
\pretriag_\B^{\pm}\left(\twcx_\B^{\pm}\left(\A\right)\right)
\xrightarrow{\sim}
\twbicx_\B^{\pm,vos}(\A),$$
while $\cxcol$ restricts to the isomorphism 
$$ \cxcol\colon 
\pretriag_\B^{\pm}\left(\twcx_\B^{\pm}\left(\A\right)\right)
\xrightarrow{\sim}
\twbicx_\B^{\pm,hos}(\A).$$
Both functors restrict to the isomorphisms 
$$ 
\cxrow, \cxcol\colon 
\pretriag_\B^{\pm}\left(\pretriag_\B^{\pm}\left(\A\right)\right)
\xrightarrow{\sim}
\twbicx_\B^{\pm,os}(\A).$$
\end{enumerate}
\end{prps}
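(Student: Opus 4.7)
The strategy rests on a single bookkeeping identification: the underlying graded object of both $\cxrow(E_i, \alpha_{ik})$ and $\cxcol(E_i, \alpha_{ik})$, after the appropriate reindexing, agrees with that of the double convolution $\conv\conv(E_i, \alpha_{ik}) = \bigoplus_{i,j \in \mathbb{Z}} E_{i,j}[-(i+j)]$ in $\B$, which exists in $\B$ by the hypothesis on $\B$ and Lemma \ref{lemma-admits-convolutions-of-unbounded-twisted-complexes}. Once this identification is fixed, each of the four parts reduces either to comparing differentials on this common graded object or to combinatorial bookkeeping.

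For part (1), I would expand the outer twisted complex condition on $(E_i, \alpha_{ik})$ into three pieces: the inner twisted complex condition inside each $E_i$, the mixed conditions relating $\alpha_{ii}$ with the inner differentials of $E_i$, and the compositional conditions on the inter-row components $\alpha_{ik}$ for $i \neq k$. The definition of $\cxrow$ groups the $i=k$ portion into a single diagonal differential $\alpha_{ii,jl} + (-1)^i \alpha_{i,jl}$, where the sign $(-1)^i$ arises from the incompatibility of the identification $\psi$ in \S\ref{section-key-isomorphism-for-twisted-complexes} with differentials under the outer shift by $-i$. A sign check using $(-1)^{k+l} = (-1)^{i+j}(-1)^{(k-i)+(l-j)}$ then shows that the three pieces recombine into the twisted bicomplex condition \eqref{eqn-the-twisted-bicomplex-condition}. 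The finiteness conditions, that the sum of differentials and the components of a morphism define maps of $\bigoplus E_{i,j}[-(i+j)]$ in $\B$, follow because the corresponding sums are already realized as endomorphisms of this object by the double convolution, which lives in $\B$ by the hypothesis on $\B$.

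Part (2) is then essentially tautological: both $\conv\conv$ and $\conv \circ \cxrow$ (respectively $\conv \circ \cxcol$) assign the same underlying graded object, and by the differential analysis of part (1) they assign the same twisted differential, with functoriality on morphisms obtained analogously. Part (3) is a direct index comparison: the reflection $(i,j) \mapsto (j,i)$ identifies the data of $\cxcol$ with the data of $\cxrow$ on the transposed bicomplex, and the sign $(-1)^{\delta_{ik}+\delta_{jl}}$ accounts for the asymmetry between the $(-1)^i$ on the diagonal rows of $\cxrow$ and the $(-1)^j$ on the diagonal columns of $\cxcol$; a case analysis on the four sub-cases $(i=k,\, j=l)$, $(i=k,\, j \neq l)$, $(i \neq k,\, j = l)$, $(i \neq k,\, j \neq l)$ confirms the stated formula, and on morphisms the check is trivial as no signs are introduced.

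Finally, for part (4), the one-sidedness conditions translate directly: outer one-sidedness $\alpha_{ik}=0$ for $i \geq k$ kills the $\alpha_{ii,jl}$ summand and all inter-row components with $k < i$, producing the one-sidedness class required of $\cxrow(E_i, \alpha_{ik})$; additional inner one-sidedness kills the remaining intra-row arrows $(-1)^i \alpha_{i,jl}$ with $j \geq l$, yielding a fully one-sided bicomplex. An inverse functor is built by reading the rows of a one-sided bicomplex as inner twisted complexes (after extracting the $(-1)^i$ sign from each diagonal row) and taking the inter-row arrows as the outer differentials; the two assignments invert each other on the nose, giving the stated isomorphisms, with the $\cxcol$ statements then following from part (3) via the $\text{Reflect}$ isomorphism. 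The main technical difficulty throughout is the careful tracking of signs originating in $\psi$ versus those built into the twisted bicomplex condition; once these are set up in part (1), the remaining parts are mechanical.
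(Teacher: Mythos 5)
Your proposal fleshes out exactly what the paper intends by its one-line proof ``Straightforward computation'': fix the underlying graded object $\bigoplus_{i,j} E_{i,j}[-(i+j)]$ in $\B$ coming from double convolution, then compare the various twisted differentials against the bicomplex differential in each of the four cases. Your treatment of parts (1) and (2) is consistent with the paper's approach, and the observation that (2) becomes tautological once the $(-1)^i$ diagonal twist in $\cxrow$ is recognised as the natural-differential sign from the outer shift is the right one.

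The weak point is that for parts (3) and (4) you only \emph{assert} that the case analyses confirm the stated formulas, rather than carrying them out; and this is precisely where the non-trivial bookkeeping lives. If you actually do the four-case check for part (3) with Definition \ref{defn-the-functor-of-rows-and-the-functor-of-columns}, you will find $(\cxcol)_{ijkl} = (\cxrow)_{jilk}$ identically in all four cases (the $(-1)^i$ versus $(-1)^j$ twists land in the same place under reflection, so there is no asymmetry to compensate for), whereas the stated sign $(-1)^{\delta_{ik}+\delta_{jl}}$ also fails the cocycle identity $\sigma(m,n,k,l)\,\sigma(i,j,m,n)=\sigma(i,j,k,l)$ needed for a reflection functor to preserve \eqref{eqn-the-twisted-bicomplex-condition}. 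Similarly, for part (4), one-sidedness of the outer complex kills $\alpha_{ii}$ but leaves the intra-row arrows $(-1)^i\alpha_{i,jl}$ in $\cxrow$, which do not satisfy the stated ``$\alpha_{ijkl}=0$ when $k\le i$'' condition; the codomain of the restricted $\cxrow$ is therefore more subtle than what you (following the statement) describe. So while your strategy matches the paper's and would succeed for parts (1) and (2), treating (3) and (4) as ``mechanical'' is exactly how one misses such discrepancies; a complete proof would need to carry the case analyses through and resolve the sign and one-sidedness conventions explicitly.
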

\begin{proof}
Straightforward computation.  
\end{proof}

The resulting ``monodromy'' $\cxrow^{-1} \circ \cxcol$
of $\pretriag_\B^{\pm}\left(\pretriag_\B^{\pm}\left(\A\right)\right)$
is a non-trivial autoequivalence which takes a complex of complexes
and slices up the resulting bigraded data of objects and differentials 
in the other direction to produce a different complex of complexes out 
of the same data, while sign-twisting purely horizontal and vertical 
differentials.  

\section{Application: $\Ainfty$-structures in monoidal DG categories}
\label{section-ainfty-structures-in-monoidal-dg-categories}

The main application we had in mind for unbounded twisted complexes is 
to reformulate and generalise the definitions of $\Ainfty$-algebras 
and modules \cite[\S2]{Lefevre-SurLesAInftyCategories}: we 
define these structures in an arbitrary DG monoidal category $\A$ 
(or, more generally, a DG bicategory). This disposes with 
the necessity to work explicitly with the operation $m_1$, 
i.e. the differential. 

Traditionally, $\Ainfty$-algebra formalism was defined for 
objects in the DG category $\modk$ of DG complexes of $k$-modules 
with its natural monoidal structure given by the tensor product of complexes
\cite[\S2]{Lefevre-SurLesAInftyCategories}. 
In $\modk$ the internal differential of each object, that is
-- its differential as a complex of $k$-modules, exists 
as a degree $1$ endomorphism of the object. 
It can therefore be a part of the definition of an
$\Ainfty$-algebra or $\Ainfty$-module in $\modk$. 
This is no longer true if we work with an arbitraty monoidal 
DG category $\A$. In $\modA$ the internal differentials of objects
do not appear as their degree $1$ endomorphisms. Moreover, if we
wanted to try and set up $\Ainfty$-formalism to work in $\A$ itself, 
its objects do not possess an internal differential. 

The language of twisted complexes solves both of these problems. 
It implicitly embeds the objects of $\A$ into $\modA$ as
$\Hom$-complexes of $\A$. These do have an internal differential: 
the differential $d_\A$ of $\A$. The twisted complex
condition \eqref{eqn-the-twisted-complex-condition} involves $d_\A$ and 
makes it possible to define an $\Ainfty$-algebra 
or module structure on an object $a \in \A$ while referring explicitly
only to operations $\left\{ m_i \right\}_{i \geq 2}$. 

The resulting definitions all ask for the corresponding bar (or
cobar) construction of the $\Ainfty$-operations to be a twisted complex.
These twisted complexes have to be unbounded, thus necessitating the
theory developed in this paper and its subtleties. We note that in the bar
constructions there is only a finite number of arrows emerging from
each element of the twisted complex. Hence, our definitions of an 
$\Ainfty$-algebra or an $\Ainfty$-module are independent of the ambient
category $\B$ we use to define unbounded twisted complexes. On the
other hand, in cobar constructions this is no longer the case and
the choice of $\B$ becomes crucial. We see another example of these
subtleties coming into play when we consider twisted complexes of
$\Ainfty$-modules in
\S\ref{section-twisted-complexes-of-ainfty-modules}. 

In \S\ref{section-ainfinity-structures-definitions} we give 
the key definitions which are studied further
\cite{AnnoLogvinenko-AinfinityStructuresInDGMonoidalCategoriesAndStrongHomotopyUnitality}. An interested reader should consult \S3.2 of that paper for 
further explanation of the way in which these definitions generalise 
the classical ones in \cite[\S2]{Lefevre-SurLesAInftyCategories}. 

In \S\ref{section-twisted-complexes-of-ainfty-modules} 
we use the twisted bicomplex techniques we developed in
\S\ref{section-twisted-bicomplexes} to prove several theorems about
twisted complexes of $\Ainfty$-modules. We first relate a twisted complex 
of $\Ainfty$-modules to an $\Ainfty$-module structure on the twisted
complex of their underlying objects. This allows us to show
that the DG category $\nodA$ of $\Ainfty$-modules over an
$\Ainfty$-algebra $A$ is strongly pretriangulated (resp.
pretriangulated) if and only if DG monoidal category $\A$ we 
work in is. Hence if we expand $\A$ to $\modA$ with the induced 
monoidal structure
\cite[\S4.5]{GyengeKoppensteinerLogvinenko-TheHeisenbergCategoryOfACategory}
all the categories of $\Ainfty$-modules over all $\Ainfty$-algebras in
it become strongly pretriangulated.  

\subsection{Definitions}
\label{section-ainfinity-structures-definitions}

Throughout this section we assume that DG monoidal category $\A$ we
work with comes with a fixed choice of a monoidal embedding 
$$ \A \hookrightarrow \B $$
into a closed monoidal DG category $\B$ which admits convolutions
of unbounded twisted complexes. Note, that we
can always set $\B = \modA$ with the induced monoidal structure
\cite[\S4.5]{GyengeKoppensteinerLogvinenko-TheHeisenbergCategoryOfACategory},
enlarging our universe if necessary when $\A$ is not small. All 
the unbounded twisted complexes over $\A$ are then defined 
relative to this ambient category $\B$.  

The condition that $\B$ is closed under convolutions of unbounded
twisted complexes can be replaced throughout by $\B$ being closed under 
the convolutions of bounded above twisted complexes and/or bounded below
twisted complexes.  

\begin{defn}
\label{defn-algebra-bar-construction-in-a-monoidal-category}
Let $\A$ be a monoidal DG category, let $A \in \A$ and let 
$\left\{m_i\right\}_{i \geq 2}$ be a collection of degree $2-i$
morphisms $A^i \rightarrow A$. 

The \em (non-augmented) bar-construction $\infbarnaug(A)$ \rm of $A$ 
is the collection of objects $A^{i+1}$ for all $i \geq 0$ each placed
in degree $-i$ and of degree $k-1$ maps 
$d_{(i+k)i}\colon A^{i+k} \rightarrow A^i$ 
defined by
\begin{equation}
\label{eqn-differentials-in-non-aug-bar-construction}
d_{(i+k)i} := (-1)^{(i-1)(k+1)} \sum_{j = 0}^{i-1} (-1)^{jk} \id^{i-j-1}
\otimes m_{k+1} \otimes \id^{j}. 
\end{equation} 

\begin{tiny}
\begin{equation}
\label{eqn-nonaugmented-bar-construction-of-A-m_i}
\begin{tikzcd}[column sep = 2.5cm]
\dots
\ar{r}[']{\begin{smallmatrix}A^3 m_2  - A^2m_2A + \\ + Am_2A^2 - m_2 A^3 \end{smallmatrix}}
\ar[bend left=25]{rr}[description]{A^2m_3 + Am_3A +  m_3 A^2}
\ar[bend left=30]{rrr}[description]{Am_4 - m_4A}
\ar[bend left=35]{rrrr}[description]{m_5}
&
A^4
\ar{r}[']{A^2m_2 - A m_2 A + m_2 A^2}
\ar[bend left=25]{rr}[description]{-Am_3 - m_3A}
\ar[bend left=30]{rrr}[description]{m_4}
& 
A^3
\ar{r}[']{Am_2 - m_2A}
\ar[bend left=25]{rr}[description]{m_3}
&
A^2
\ar{r}[']{m_2}
&
\underset{\degzero}{A}
\end{tikzcd}
\end{equation}
\end{tiny}

\end{defn}

\begin{defn}
\label{defn-ainfty-algebra-in-a-monoidal-category}
Let $\A$ be a monoidal DG category. An \em $\Ainfty$-algebra $(A,m_i)$ \rm 
in $\A$ is an object $A \in \A$ equipped 
with operations $m_i \colon A^i \rightarrow A$ for all $i \geq 2$
which are degree $2-i$ morphisms in $\A$ such that their non-augmented 
bar-construction $\infbarnaug(A)$ is a twisted complex over $\A$. 
\end{defn}

We define morphisms of $\Ainfty$-algebras in $\A$ in a similar way:

\begin{defn} 
\label{defn-the-bar-construction-of-a-morphism-of-ainfty-algebras}
Let $(A, m_k)$ and $(B, n_k)$ be $\Ainfty$-algebras in $\A$. 
Let $(f_i)_{i \geq 1}$ be a collection of degree $1 - i$ 
morphisms $A^i \rightarrow B$. 

The \em bar-construction \rm $\infbar(f_\bullet)$ is the morphism 
$\infbarnaug(A) \rightarrow \infbarnaug(B)$ in $\pretriagmns(\A)$
whose $A^{i+k} \rightarrow B^i$ component is 
$$ \sum_{t_1 + \dots + t_i = i + k} 
(-1)^{\sum_{l=2}^{i}(1-t_l)\sum_{n=1}^l t_n}
f_{t_1}\otimes\ldots \otimes f_{t_i}. $$
\begin{small}
\begin{equation*}
\begin{tikzcd}[column sep = 2.6cm, row sep = 3cm]
\dots
\ar{r}
&
A^4
\ar{r}
\ar{d}[description, pos = 0.80]{f_1f_1f_1f_1}
\ar{dr}[description, pos = 0.60]{f_1f_1f_2 - f_1 f_2 f_1 + f_2 f_1 f_1}
\ar{drr}[description, pos = 0.30]{f_1f_3 + f_2 f_2 + f_3 f_1}
\ar{drrr}[description, pos = 0.10]{f_4}
&
A^3
\ar{r}
\ar{d}[description, pos = 0.80]{f_1f_1f_1}
\ar{dr}[description, pos = 0.60]{-f_1f_2 + f_2 f_1}
\ar{drr}[description, pos = 0.30]{f_3}
&
A^2
\ar{r}
\ar{d}[description, pos = 0.80]{f_1f_1}
\ar{dr}[description, pos = 0.60]{f_2}
&
A
\ar{d}[description, pos = 0.80]{f_1}
\\
\dots
\ar{r}
&
B^4
\ar{r}
&
B^3
\ar{r}
&
B^2
\ar{r}
&
B.
\end{tikzcd}
\end{equation*}
\end{small}
\end{defn}
\begin{defn}
\label{defn-morphism-of-ainfty-algebras}
A morphism $f_\bullet\colon (A,m_k) \rightarrow (B, n_k)$ of 
$\Ainfty$-algebras is a collection 
$(f_i)_{i \geq 1}$ of degree $1 - i$ morphisms $A^i \rightarrow B$ 
whose bar construction is a closed degree $0$ morphism of twisted
complexes. 
\end{defn}

We define left and right $\Ainfty$-modules over such $(A,m_\bullet)$
in a similar way:

\begin{defn}
\label{defn-right-module-bar-construction-in-a-monoidal-category}
Let $(A,m_i)$ be an $\Ainfty$-algebra in a monoidal DG category $\A$. 
Let $E \in \A$ and let $\left\{p_i\right\}_{i \geq 2}$ be a collection 
of degree $2-i$ morphisms $E \otimes A^{i-1} \rightarrow E$. 

The \em right module bar-construction $\infbar(E)$ \rm of $(E,p_i)$
comprises objects $E \otimes A^{i}$ for $i \geq 0$ 
placed in degree $-i$ and degree $1-k$ maps 
$E \otimes A^{i+k-1} \rightarrow E \otimes A^{i-1}$ 
defined by
\begin{scriptsize}
\begin{equation}
\label{eqn-differentials-in-right-module-bar-construction}
d_{(i+k)i} := (-1)^{(i-1)(k+1)} 
\left(\sum_{j = 0}^{i-2} \left( (-1)^{jk} \id^{i-j-1} \otimes m_{k+1} \otimes
\id^{j}\right) + (-1)^{(i-1)k}p_{k+1}\otimes \id^{i-1} \right). 
\end{equation} 
\end{scriptsize}

\begin{tiny}
\begin{equation}
\label{eqn-right-module-bar-construction-of-A-m_i}
\begin{tikzcd}[column sep = 2.4cm]
\dots
\ar{r}[']{\begin{smallmatrix}EA^2 m_2  - EAm_2A + \\ + Em_2A^2 - p_2 A^3 \end{smallmatrix}}
\ar[bend left=25]{rr}[description]{EA m_3 + Em_3A + p_3 A^2 }
\ar[bend left=30]{rrr}[description]{Em_4 - p_4A}
\ar[bend left=35]{rrrr}[description]{p_5}
&
EA^3
\ar{r}[']{EAm_2 - E m_2 A + p_2 A^2}
\ar[bend left=25]{rr}[description]{-Em_3 - p_3A}
\ar[bend left=30]{rrr}[description]{p_4}
& 
EA^2
\ar{r}[']{Em_2 - p_2A}
\ar[bend left=25]{rr}[description]{p_3}
&
EA
\ar{r}[']{p_2}
&
\underset{\degzero}{E}.
\end{tikzcd}
\end{equation}
\end{tiny}
\end{defn}

\begin{defn}
\label{defn-left-module-bar-construction-in-a-monoidal-category}
For $E \in \A$ and a collection 
$\left\{p_i\right\}_{i \geq 2}$ of degree $2-i$
morphisms $ A^{i-1}\otimes E \rightarrow E$,
its \em left module bar-construction $\infbar(E)$ \rm
comprises objects $A^{i}\otimes E$ for all $i \geq 0$ placed
in degree $-i$ and degree $1-k$ maps 
$A^{i+k-1}\otimes E \rightarrow A^{i-1}\otimes E$ 
defined by
\begin{equation}
\label{eqn-differentials-in-left-module-bar-construction}
d_{(i+k)i} := (-1)^{(i-1)(k+1)} 
\left(\sum_{j = 1}^{i-1} \left( (-1)^{jk} \id^{i-j-1} \otimes m_{k+1} \otimes
\id^{j}\right) + \id^{i-1} \otimes p_{k+1} \right). 
\end{equation} 

\begin{tiny}
\begin{equation}
\label{eqn-left-module-bar-construction-of-A-m_i}
\begin{tikzcd}[column sep = 2.4cm]
\dots
\ar{r}[']{\begin{smallmatrix}A^3 p_2  - A^2m_2E + \\ + Am_2AE - m_2 A^2E \end{smallmatrix}}
\ar[bend left=25]{rr}[description]{A^2 p_3 + Am_3E + m_3 AE }
\ar[bend left=30]{rrr}[description]{Ap_4 - m_4E}
\ar[bend left=35]{rrrr}[description]{p_5}
&
A^3E
\ar{r}[']{A^2p_2 - A m_2 E + m_2 AE}
\ar[bend left=25]{rr}[description]{-Ap_3 - m_3E}
\ar[bend left=30]{rrr}[description]{p_4}
& 
A^2E
\ar{r}[']{Ap_2 - m_2E}
\ar[bend left=25]{rr}[description]{p_3}
&
AE
\ar{r}[']{p_2}
&
\underset{\degzero}{E}.
\end{tikzcd}
\end{equation}
\end{tiny}
\end{defn}

\begin{defn}
Let $\A$ be a monoidal DG category and let $(A,m_i)$ be an
$\Ainfty$-algebra in $\A$. A \em right (resp. left) $\Ainfty$-module 
$(E, p_i)$ over $A$ \rm is an object $E \in \A$ and a collection  
$\left\{p_i\right\}_{i \geq 2}$ of degree $2-i$
morphisms $E \otimes A^{i-1} \rightarrow E$ (resp. $A^{i-1} \otimes E
\rightarrow E$) such that $\infbar(E)$ is a twisted complex. 
\end{defn} 

\begin{defn} 
\label{defn-right-module-bar-constructions-of-an-Ainfty-morphism}
Let $\A$ be a monoidal DG category and let $(A,m_i)$ be an
$\Ainfty$-algebra in $\A$.
Let $(E, p_k)$ and $(F, q_k)$ be right $\Ainfty$-modules over $A$ in $\A$. 

A \em degree $j$ morphism \rm $f_\bullet\colon (E,p_k) \rightarrow (F, q_k)$ of 
right $\Ainfty$-$A$-modules is a collection $(f_i)_{i \geq 1}$ of 
degree $j - i + 1$ morphisms $E \otimes A^{i-1} \rightarrow F$. Its
\em bar-construction \rm $\infbar(f_\bullet)$ is 
the morphism $\infbar(E) \rightarrow \infbar(F)$ in $\pretriagmns(\A)$
whose components are  
$$
E \otimes A^{i+k-1} \rightarrow F \otimes A^{i-1}\colon \;
(-1)^{j(i-1)}  f_{k+1}\otimes \id^{i-1}. $$
We illustrate the case when $f_\bullet$ is of odd degree:
\begin{small}
\begin{equation*}
\begin{tikzcd}[column sep = 2.6cm, row sep = 3cm]
\dots
\ar{r}
&
EA^3
\ar{r}
\ar{d}[description, pos = 0.85]{-f_1A^3}
\ar{dr}[description, pos = 0.65]{f_2 A^2}
\ar{drr}[description, pos = 0.30]{-f_3 A} 
\ar{drrr}[description, pos = 0.10]{f_4}
&
EA^2
\ar{r}
\ar{d}[description, pos = 0.85]{f_1 A^2}
\ar{dr}[description, pos = 0.65]{-f_2A}
\ar{drr}[description, pos = 0.30]{f_3}
&
EA
\ar{r}
\ar{d}[description, pos = 0.85]{-f_1A}
\ar{dr}[description, pos = 0.65]{f_2}
&
E
\ar{d}[description, pos = 0.85]{f_1}
\\
\dots
\ar{r}
&
FA^3
\ar{r}
&
FA^2
\ar{r}
&
FA
\ar{r}
&
F.
\end{tikzcd}
\end{equation*}
\end{small}
\end{defn}

The corresponding definition for the left $\Ainfty$-modules differs only
in signs:
\begin{defn} 
\label{defn-left-module-bar-constructions-of-an-Ainfty-morphism}
Let $\A$ be a monoidal DG category and let $(A,m_i)$ be an
$\Ainfty$-algebra in $\A$.
Let $(E, p_k)$ and $(F, q_k)$ be left $\Ainfty$-modules over $A$ in $\A$. 

A degree $j$ morphism $f_\bullet\colon (E,p_k) \rightarrow (F, q_k)$ of 
left $\Ainfty$-$A$-modules is a collection $(f_i)_{i \geq 1}$ of 
degree $j - i + 1$ morphisms $A^{i-1} \otimes E \rightarrow F$. 
Its \em bar-construction \rm $\infbar(f_\bullet)$ is 
the morphism $\infbar(E) \rightarrow \infbar(F)$ in $\pretriagmns(\A)$
whose components are 
$$ A^{i+k-1} \otimes E \rightarrow A^{i-1} \otimes F \colon \;
(-1)^{(j+k)(i-1)}  \id^{i-1} \otimes f_{k+1}. $$
\end{defn}

We define the DG categories of left and right modules over $A$ in the
unique way which makes the left and right module bar constructions 
into faithful DG functors from these categories to $\pretriagmns(\A)$:

\begin{defn}
Let $\A$ be a monoidal DG category and $A$ be an $\Ainfty$-algebra in
$\A$. Define the \em DG category $\nodA$ of
right $\Ainfty$-$A$-modules in $\A$ \rm by:
\begin{itemize}
\item Its objects are right $\Ainfty$-$A$-modules in $\A$,
\item For any $E,F \in \obj \noddinf A$, the complex 
$\homm^\bullet_{\noddinf A}(E,F)$
consists of $\Ainfty$-morphisms $f_\bullet\colon E \rightarrow F$
with their natural grading. The differential and
the composition is defined by composing the corresponding 
twisted complex morphisms. 
\item The identity morphism of $E \in \noddinf A$ is the morphism 
$(f_\bullet)$ with $f_1 = \id_E$ and $f_{\geq 2} = 0$ whose
corresponding twisted complex morphism is $\id_{\infbar(E)}$. 
\end{itemize}
The \em DG category $\Anod$ of left $\Ainfty$-$A$-modules 
in $\A$ \rm is defined analogously.  
\end{defn}

Similar definitions exist for $\Ainfty$-coalgebras and 
$\Ainfty$-comodules, see
\cite[\S6]{AnnoLogvinenko-AinfinityStructuresInDGMonoidalCategoriesAndStrongHomotopyUnitality}. 

\subsection{Twisted complexes of $\Ainfty$-modules}
\label{section-twisted-complexes-of-ainfty-modules}

The notion of an $\Ainfty$-module over an $\Ainfty$-algebra
$(A,m_\bullet)$ in a monoidal DG category $\A$ which we defined in
\S\ref{section-ainfinity-structures-definitions}
differs in several ways from the usual notion which corresponds to 
the case $\A = \modk$. 

One is that the DG-category of usual $\Ainfty$-modules is strongly 
pretriangulated, while in our generality $\nodA$ doesn't have to be. 
In this section we show that $\nodA$ is strongly pretriangulated 
if and only if $\A$ is strongly pretriangulated. 

First, we need to fix our conventions. 
As in \S\ref{section-ainfinity-structures-definitions} we assume 
that our monoidal DG category $\A$ comes with a monoidal
embedding into a closed monoidal DG category $\B$ which has
convolutions of unbounded twisted complexes. Recall that we
can always set $\B = \modA$ with the induced monoidal structure
\cite[\S4.5]{GyengeKoppensteinerLogvinenko-TheHeisenbergCategoryOfACategory},

We define $\twcxub \A$ and $\twcxub \B$ relative to $\B$.  
Thus twisted complexes in $\twcxub \A$ and $\twcxub \B$ can have 
infinite number of differentials and/or morphism components emerge from a
single object, but only if their sum still defines a morphism in $\B$. 
By Lemma \ref{lemma-admits-convolutions-of-unbounded-twisted-complexes},
since $\B$ admits convolutions of unbounded twisted complexes, 
the convolution functor $\twcxub \B \hookrightarrow \B$ is an equivalence. 

Let $(A,m_\bullet)$ be an $\Ainfty$-algebra in $\A$. In
\S\ref{section-ainfinity-structures-definitions} we define the DG
category $\nodA$ of right $\Ainfty$-$A$-modules in $\A$. 
Using the monoidal embedding we can view $(A,m_\bullet)$ as an
$\Ainfty$-algebra in $\B$. We write $\nodrepA$ for the category 
of right $\Ainfty$-$A$-modules in $\B$. Note that we have tautological
embedding $\nodA \hookrightarrow \nodrepA$. 

We now want to define $\twcxub \nodA$. The ``naive'' 
definition gives us a convolution into 
$\modd\text{-}\left(\nodA\right)$, but it isn't the category we want 
to work with. Instead, we have an embedding of $\nodA$ into $\nodrepA$, 
and we want $\nodrepA$ to be the target for the convolution of twisted 
complexes of $\nodA$. 

The category $\nodrepA$ is closed under shifts and
direct sums because $\B$ is. Indeed, $(E,p_\bullet)[n] = (E[n], (-1)^n
p_\bullet)$ and $\oplus_i (E_i, p_{i\bullet}) = (\oplus_i E_i, \sum_i
p_{i\bullet})$. To see that $\sum_i p_{i\bullet}$ define an
$\Ainfty$-module structure on $\oplus_i E_i$, note that since
$\B$ is closed monoidal its monoidal structure commutes 
with infinite direct sums. We thus define both 
$\twcxub \nodA$ and $\twcxub \nodrepA$ relative to $\nodrepA$. 
This yields fully faithful convolution functors from both into
$\modd\text{-}\left(\nodrepA\right)$. 
We now want to show that $\nodrepA$ admits convolutions of unbounded 
twisted complexes, and thus both convolution functors take values 
in $\nodrepA$. 

For this, we prove below that a twisted complex of $\Ainfty$-modules
defines the structure of an $\Ainfty$-module on the
underlying twisted complex of objects of $\A$. But first,  
we define what such structure is. The category $\twcxub \A$ is 
not apriori monoidal as a tensor product of two twisted complexes over $\A$ 
should have as objects direct sums of objects of $\A$. 
These do not apriori exist in $\A$, but they do exist in $\B$. 
Indeed, $\twcxub \B$ is a monoidal category equivalent to $\B$. 

\begin{defn}
Let $\twcxA$ denote $A$ considered as a trivial twisted complex 
in $\twcxub \A$. Define $\nodtwcxA$ to be the full subcategory 
of $\nodrepA$ consisting of the $\Ainfty$-modules whose underlying objects of
$\B$ lie in $\twcxub \A \subseteq \twcxub \B \simeq \B$.  
\end{defn}

Explicitly, an object of $\nodtwcxA$ is a twisted complex 
$(E_i, \alpha_{ij})$ over $\A$ together with degree $2-k$
twisted complex morphisms 
$$ p_k\colon (E_i \otimes A^{k-1}, \alpha_{ij} \otimes \id)
\rightarrow (E_i, \alpha_{ij}) $$ 
such that their right-module bar-construction is a twisted complex 
of twisted complexes $(E_i \otimes A^{k-1}, \alpha_{ij} \otimes \id)$. 

We can now state the main result of this subsection:

\begin{prps}
\label{prps-twcxub-nodA-embeds-into-nod-twcxub-A}
There exist fully faithful embeddings of DG-categories: 
\begin{equation}
\label{eqn-twcxub-nodA-embeds-into-nod-twcxub-A}
\Phi\colon 
\twcxub\left(\nodA\right) 
\hookrightarrow 
\nodtwcxA,
\end{equation}
\begin{equation}
\label{eqn-twcxub-nodrepA-embeds-into-nod-twcxub-repA}
\Phi\colon 
\twcxub\left(\nodrepA\right) 
\hookrightarrow 
\nodrepA.
\end{equation}

These preserve boundedness and one-sidedness of twisted
complexes. We can replace $\twcxub$ with any of $\twcxpls$,
$\twcxmns$, $\twcx$, $\pretriagub$, $\pretriagpls$, $\pretriagmns$, 
or $\pretriag$. 
\end{prps}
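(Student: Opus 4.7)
The strategy is to bar-construct everything and use the correspondence between twisted complexes of twisted complexes and twisted bicomplexes from Proposition~\ref{prps-functors-cxrow-and-cxcol}. Given $(E_i, \alpha_{ij}) \in \twcxub(\nodA)$, applying Defn.~\ref{defn-right-module-bar-construction-in-a-monoidal-category} to each $E_i$ produces $\infbar(E_i) \in \pretriagmns(\A)$, and applying Defn.~\ref{defn-right-module-bar-constructions-of-an-Ainfty-morphism} to each $\alpha_{ij}$ gives a morphism $\infbar(\alpha_{ij})$ of such twisted complexes. The collection $(\infbar(E_i), \infbar(\alpha_{ij}))$ satisfies the twisted complex condition in $\pretriagmns(\A)$, hence defines an object of $\twcxub(\pretriagmns(\A))$. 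Applying $\cxrow$ from Prop.~\ref{prps-functors-cxrow-and-cxcol} then yields a vertically one-sided twisted bicomplex over $\A$.

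The main claim will be that this bicomplex is itself the right module bar construction of a candidate $\Ainfty$-$A$-module structure $(F, P_\bullet)$ on the convolution $F = \bigoplus_i E_i[-i] \in \twcxub(\A)$, where each $P_k$ is a suitably signed sum of the structure maps $p_{i,k}$ of the $E_i$ and the components $\alpha_{ij,k}$ of the connecting $\Ainfty$-morphisms. Concretely, the differential of the $\cxrow$ bicomplex between positions $(i,j)$ and $(k,l)$ is to be matched against the corresponding piece of the bar differential $d_{(i+k)i}$ from \eqref{eqn-differentials-in-right-module-bar-construction} for $(F, P_\bullet)$. Granting this identification, the bicomplex condition \eqref{eqn-the-twisted-bicomplex-condition}, which holds automatically by Prop.~\ref{prps-functors-cxrow-and-cxcol}, becomes the assertion that $\infbar(F, P_\bullet)$ is a twisted complex, i.e.\ that $(F, P_\bullet)$ is a genuine $\Ainfty$-$A$-module with underlying object in $\twcxub(\A) \subseteq \B$, so $\Phi(E_i, \alpha_{ij}) := (F, P_\bullet)$ lies in $\nodtwcxA$. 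For a morphism of $\twcxub(\nodA)$ with components $f_{ij}$ that are themselves $\Ainfty$-morphisms, the same three-step procedure (bar-construct componentwise, apply $\cxrow$, recognise as a bar construction of an $\Ainfty$-morphism) defines $\Phi$ on $\homm$-complexes. DG functoriality follows from that of $\infbar$ and $\cxrow$. Full faithfulness then follows since $\infbar$ is fully faithful by construction of $\nodA$, $\cxrow$ is an isomorphism by Prop.~\ref{prps-functors-cxrow-and-cxcol}, and the identification of the image with bar constructions of $\Ainfty$-modules is bijective on both objects and morphisms.

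The main obstacle will be a careful sign computation: one must verify that the shift signs built into \eqref{eqn-the-hom-complex-of-twisted-complexes-differential}, the bar-construction signs $(-1)^{(i-1)(k+1)}$ in \eqref{eqn-differentials-in-right-module-bar-construction}, and the sign twists $(-1)^i$ introduced by $\cxrow$ in Defn.~\ref{defn-the-functor-of-rows-and-the-functor-of-columns} combine consistently. This bookkeeping is forced by the choice of the key isomorphism $\psi$ in \S\ref{section-key-isomorphism-for-twisted-complexes} and is routine, but it is the most laborious step. Preservation of boundedness and one-sidedness for the subcategories $\twcxpls$, $\twcxmns$, $\twcx$, $\pretriagub$, $\pretriagpls$, $\pretriagmns$, and $\pretriag$ is inherited from the corresponding preservation properties of $\cxrow$ in Prop.~\ref{prps-functors-cxrow-and-cxcol}. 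Finally, the embedding \eqref{eqn-twcxub-nodrepA-embeds-into-nod-twcxub-repA} is obtained by running the identical argument with $\A$ replaced by $\B$ throughout; since $\twcxub \B \simeq \B$ by Lemma~\ref{lemma-admits-convolutions-of-unbounded-twisted-complexes}, the target of the resulting embedding is $\nodrepA$.
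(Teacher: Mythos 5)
Your strategy is the same as the paper's — bar-construct, pass to a twisted bicomplex, and recognize the result as the bar construction of an $\Ainfty$-module over a twisted complex — but there are genuine gaps in the way you set it up.

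First, the orientation of the bicomplex does not work out as written. You apply $\cxrow$ to $(\infbar(E_i), \infbar(\alpha_{ij})) \in \twcxub(\pretriagmns(\A))$. Under $\cxrow$ the outer index $i$ becomes the row index and the bar degree becomes the column index, so the resulting bicomplex is \emph{horizontally} one-sided (the bar constructions become rows), not vertically one-sided as you claim. Meanwhile, the bar construction $\infbar(F, P_\bullet)$ of a module over the twisted complex $F$ lives in $\pretriagmns(\twcxub(\A))$: the bar degree is the \emph{outer} index there, so applying $\cxrow$ to it would place the bar degree in rows and $i$ in columns — the opposite orientation. These two bicomplexes therefore cannot be compared directly. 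The paper resolves this by applying $\cxcol$ on the $\twcxub(\nodA)$ side and $\cxrow$ on the $\nodtwcxA$ side, so that $i$ lands in columns and bar degree in rows on both. You would either need to switch one of your applications to $\cxcol$, or insert the $\text{Reflect}$ automorphism of Prop.~\ref{prps-functors-cxrow-and-cxcol}(3), and neither is done.

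Second, the claim that ``$\cxrow$ is an isomorphism by Prop.~\ref{prps-functors-cxrow-and-cxcol}'' is incorrect in the range where you use it. That proposition only establishes that $\cxrow$ restricts to an isomorphism on $\pretriag_\B^{\pm}(\twcx_\B^{\pm}(\A))$ (outer one-sided), whereas your input $\twcxub(\pretriagmns(\A))$ is inner one-sided, outside the domain of that restriction. So full faithfulness of $\Phi$ cannot be deduced this way; the paper instead reads it off from the explicit formula $\Phi((f_{ij\bullet})) = ((-1)^{i\bullet}f_{ij\bullet})$. Finally, treating the sign bookkeeping as routine understates the one non-obvious observation that makes the argument work: the images of $\cxcol \circ \twcxub(\infbar)$ and $\cxrow \circ \infbar$ differ precisely by the automorphism $\sigma$ of $\twbicx_\B^{\pm}(\A)$ that multiplies every differential and morphism component $\alpha_{ijkl}$, $f_{ijkl}$ by $(-1)^{ij+kl}$. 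This $\sigma$ should be stated explicitly — it is the heart of the proof, not a detail to be filled in afterwards.
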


Note that the embedding
$\A \hookrightarrow \B$ and the convolution functor $\twcxub \A
\hookrightarrow \B$ induces fully faithful functors from the
LHS and the RHS of \eqref{eqn-twcxub-nodA-embeds-into-nod-twcxub-A}
to those of \eqref{eqn-twcxub-nodrepA-embeds-into-nod-twcxub-repA}. 
Our construction of $\Phi$ in the proof below
ensures that \eqref{eqn-twcxub-nodA-embeds-into-nod-twcxub-A} is 
the restriction of  
\eqref{eqn-twcxub-nodrepA-embeds-into-nod-twcxub-repA}. 

\begin{proof}
The bar-construction functor $\infbar: \nodA \rightarrow \pretriagmns\A$
induces a functor 
$\twcxub(\infbar)\colon \twcxub \nodA
\rightarrow
\twcx^{\pm}\left(\twcx^{\pm}\A\right)$. 
Composing it with $\cxcol$ (see 
Defn.~\ref{defn-the-functor-of-rows-and-the-functor-of-columns})
we get a functor 
$$ 
\cxcol \circ \twcxub(\infbar)\colon 
\twcxub\left(\nodA)\right)
\rightarrow \twbicx_{\B}^{\pm}(\A). $$
Similarly, composing
$\infbar\colon \nodA^{\twcxub(\A)} \rightarrow
\pretriagmns\left(\twcxub(\A)\right)$
with $\cxrow$ gives 
$$ \cxrow \circ \infbar \colon
\nodA^{\twcxub(\A)} \rightarrow \twbicx_{\B}^{\pm}(\A). $$
The functors $\infbar$, $\cxcol$, and $\cxrow$ are injective on 
objects and faithful. Hence so are $\cxcol \circ \twcxub(\infbar)$
and $\cxrow \circ \infbar$.

This proof is based on the observation that the image
of $\cxcol \circ \twcxub(\infbar)$ is mapped into the image of
$\cxrow \circ \infbar$ by the automorphism $\sigma$ of
$\twbicx_{\B}^{\pm}(\A)$ which multiplies every differential
$\alpha_{ijkl}$ and morphism component $f_{ijkl}$ by $(-1)^{ij+kl}$. 
Thus there exists a unique functor $\Phi$
which makes the following square commute:
\begin{equation}
\label{eqn-cxrow-infbar-cxcol-infbar}
\begin{tikzcd}[column sep = 3cm]
\twcxub\left(\nodA\right)
\ar{r}{\cxcol \circ \twcxub(\infbar)}
\ar{d}{\Phi}
&
\twbicx_{\B}^{\pm}(\A)
\ar{d}{\sigma}
\\
\nodA^{\twcxub(\A)} 
\ar{r}{\cxrow \circ \infbar}
&
\twbicx_{\B}^{\pm}(\A).
\end{tikzcd}
\end{equation}

Explicitly, $\Phi$ has the following description. Let
$((E_i, p_{i\bullet}), \alpha_{ij\bullet}) \in
\twcxub\left(\nodA\right)$. Set 
$P_{k+1}\colon (E_i, \alpha_{ij1}) \otimes A^k \to (E_i,\alpha_{ij1})$
to be the morphism of twisted complexes whose components are
$(-1)^{ik}\alpha_{ijk} + \delta_{ij} (-1)^{i(k+1)}p_{i,k+1}$.
Then 
\begin{align}
\label{eqn-definition-of-Phi-on-objects}
\Phi((E_i, \alpha_{ij\bullet})) & = ((E_i,\alpha_{ij1}), P_\bullet)
\\
\label{eqn-definition-of-Phi-on-morphisms}
\Phi((f_{ij\bullet})) & = \left( (-1)^{i\bullet} f_{ij\bullet}\right). 
\end{align}

Taking the above as the definition of $\Phi$ in 
\eqref{eqn-twcxub-nodA-embeds-into-nod-twcxub-A}, one can now verify by 
direct computation that $\Phi$ is well-defined and that it makes 
\eqref{eqn-cxrow-infbar-cxcol-infbar} commute. Its fully faithfullness 
follows immediately from \eqref{eqn-definition-of-Phi-on-morphisms}. 

An identical bicomplex argument applies to 
$\twcxub\left(\nodrepA\right)$ leading to the identical definition of 
$\Phi$ in \eqref{eqn-twcxub-nodrepA-embeds-into-nod-twcxub-repA} 
via the same formulas
\eqref{eqn-definition-of-Phi-on-objects}
and \eqref{eqn-definition-of-Phi-on-morphisms}
which is well-defined and fully faithful for the same reasons.

\end{proof}

It follows that $\nodA$ is pretriangulated if and only if $\A$ is:

\begin{cor}
\label{cor-noddinfA-strongly-pretriangulated-iff-A-is}
The natural embedding 
\begin{equation}
\label{eqn-natural-embedding-nodA-to-twcxub-nodA}
\nodA \hookrightarrow \twcxub\left(\nodA\right) 
\end{equation}
is an equivalence (resp. quasi-equivalence) 
if and only if $\A \hookrightarrow \twcxub\left(\A\right)$ is.
The same holds if $\twcxub$ is replaced by any of its
subcategories $\twcx^\bullet$ or $\pretriag^\bullet$. 
\end{cor}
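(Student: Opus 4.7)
The plan is to leverage the fully faithful embedding $\Phi\colon \twcxub(\nodA) \hookrightarrow \nodtwcxA$ from Proposition~\ref{prps-twcxub-nodA-embeds-into-nod-twcxub-A}, exploiting the crucial fact that the composition $\nodA \xrightarrow{\iota} \twcxub(\nodA) \xrightarrow{\Phi} \nodtwcxA$ is the tautological inclusion of $\nodA$ into $\nodtwcxA$ as the full subcategory of objects whose underlying object lies in $\A \subseteq \twcxub(\A)$. This follows directly from \eqref{eqn-definition-of-Phi-on-objects}--\eqref{eqn-definition-of-Phi-on-morphisms}: a trivial twisted complex over $\nodA$ has vanishing $\alpha_{ij\bullet}$, so $\Phi$ simply reproduces the $\Ainfty$-module structure on the sole object in degree zero, now viewed as a trivial twisted complex in $\twcxub(\A)$.

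For the forward direction, suppose $\A \hookrightarrow \twcxub(\A)$ is an equivalence. Lemma~\ref{lemma-admits-convolutions-of-unbounded-twisted-complexes} then asserts that $\A$ admits convolutions of unbounded twisted complexes, so every object of $\twcxub(\A) \subseteq \B$ is isomorphic in $\B$ to an object of $\A$. Pulling back $\Ainfty$-structures along such isomorphisms shows that the tautological inclusion $\nodA \hookrightarrow \nodtwcxA$ is an equivalence. Since $\Phi$ and $\iota$ are fully faithful and their composition is an equivalence, both must themselves be equivalences: for any $c \in \twcxub(\nodA)$, we have $\Phi(c) \simeq \Phi(\iota(a))$ for some $a \in \nodA$, and fully faithfulness of $\Phi$ yields $c \simeq \iota(a)$. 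The quasi-equivalence version runs identically, with isomorphisms replaced by quasi-isomorphisms and using the homotopy transfer of $\Ainfty$-module structures from the Appendix to lift underlying-object quasi-isomorphisms to the level of $\Ainfty$-modules.

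For the backward direction, assume $\iota\colon \nodA \hookrightarrow \twcxub(\nodA)$ is an equivalence. Given any $E = (E_i, \alpha_{ij1}) \in \twcxub(\A)$, we lift it to a twisted complex $c \in \twcxub(\nodA)$ with underlying object $E$ by choosing a canonical $\Ainfty$-$A$-module structure on each $E_i$ (via the free-module construction $E_i \otimes A$, say) and keeping the $\alpha_{ij1}$ as strict first components of the twisted morphisms. The hypothesis then yields $a \in \nodA$ with $c \simeq \iota(a)$; applying $\Phi$ and the forgetful functor $\nodtwcxA \to \twcxub(\A)$---which is compatible with $\Phi$ by construction---gives $E \simeq U(a) \in \A$ inside $\B$, so $\A \hookrightarrow \twcxub(\A)$ is essentially surjective.

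The principal obstacle lies in the backward direction: one needs a canonical way to equip objects of $\A$ with $\Ainfty$-$A$-module structures that is compatible with forming twisted complexes. The free-module construction handles the (strongly homotopy-)unital case; more generally one uses any available canonical lift $\A \hookrightarrow \nodA$ suited to the particular $\Ainfty$-algebra $A$. Once this lift is fixed, compatibility of $\Phi$ with the underlying-object forgetful functor reduces the rest to a routine diagram chase, and the analogous statement for any subcategory $\twcx^{\bullet}$ or $\pretriag^{\bullet}$ in place of $\twcxub$ follows because $\Phi$ and $\iota$ preserve boundedness and one-sidedness.
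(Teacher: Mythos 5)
Your forward direction is essentially the paper's: use the fully faithful $\Phi$ from Prop.~\ref{prps-twcxub-nodA-embeds-into-nod-twcxub-A} so that $\nodA \hookrightarrow \twcxub(\nodA)$ is an equivalence iff the tautological inclusion $\nodA \hookrightarrow \nodtwcxA$ is, then either pull back $\Ainfty$-structures along genuine isomorphisms (strong case) or invoke homotopy transfer from the Appendix (quasi-equivalence case). That part is sound, modulo saying ``homotopy equivalences'' rather than ``quasi-isomorphisms''---the relevant notion is isomorphism in $H^0$ of the twisted complex category.

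Your backward direction, however, has a genuine gap. You try to lift a twisted complex $E = (E_i,\alpha_{ij})$ over $\A$ to one over $\nodA$ by giving each $E_i$ an $\Ainfty$-$A$-module structure, and propose the free-module construction $E_i \otimes A$. But this does not equip $E_i$ itself with a module structure---it replaces $E_i$ by a different object $E_i \otimes A$. Consequently the forgetful functor applied to the resulting twisted complex yields $\bigoplus (E_i \otimes A)[-i]$, not $E$, and your diagram chase does not close. Moreover the free construction requires a (homotopy) unit in $A$, a hypothesis not in play. You correctly identify the obstacle (``one needs a canonical way to equip objects of $\A$ with $\Ainfty$-$A$-module structures''), but the candidate you offer fails, and the appeal to ``any available canonical lift'' leaves the key step unresolved.

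The paper fills this gap with an elementary observation that removes all the difficulty: the forgetful functor $U\colon \nodA \to \A$ has an explicit right inverse $s\colon \A \to \nodA$ sending $a \mapsto (a, p_\bullet)$ with $p_i = 0$ for all $i\ge 2$, and $f \mapsto (f_\bullet)$ with $f_1 = f$, $f_{>1}=0$. The right-module bar construction of $(a,0)$ is a twisted complex because it reduces to the bar construction of $A$ itself tensored with $a$, so the twisted complex condition follows from the $\Ainfty$-algebra axioms alone; no unitality is needed, and the underlying object is genuinely $a$. Since $\twcxub(U)\circ\twcxub(s) = \id$ and everything commutes with the tautological embeddings $\iota$, essential surjectivity of $\iota_{\nodA}$ transports directly to essential surjectivity of $\iota_{\A}$ by a two-line retract argument. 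This is the section you should have looked for.
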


Note that $\A \hookrightarrow \twcxub\A$ is never an
equivalence. Let $\left\{a_i\right\}$ be a twisted complex
with zero differentials. We define $\twcxub\A$ relative to $\modA$,
so any morphism from some $b \in \A$ to $\left\{a_i\right\}$ only has 
a finite number of non-zero components $b \rightarrow a_i$. So 
it can not have a right inverse. Thus, by above Corollary,
$\nodA$ never admits the convolutions of unbounded twisted 
complexes taken relative to $\nodrepA$. 

\begin{proof}
All arguments in this proof work identically if we replace $\twcxub$ 
with any of its full subcategories $\twcx^\bullet$ or
$\pretriag^\bullet$. 
 
\em ``If'': \rm 
Since \eqref{eqn-natural-embedding-nodA-to-twcxub-nodA}
is fully faithful, it is an equivalence (resp.
quasi-equivalence) if so is its composition 
$\nodA \hookrightarrow \nodtwcxA$ with 
\eqref{eqn-twcxub-nodA-embeds-into-nod-twcxub-A}. 

If $\A \hookrightarrow \twcxub\left(\A\right)$ is an equivalence, 
it is clear that so is $\nodA \hookrightarrow \nodtwcxA$.
If $\A \hookrightarrow \twcxub\left(\A\right)$ is only a quasi-equivalence, 
$\nodA \hookrightarrow \nodtwcxA$ is also a quasi-equivalence, but 
this requires more work.  
Let $((E_i,\alpha_{ij}), p_\bullet) \in \nodtwcxA$. 
By assumption, $(E_i,\alpha_{ij})$ is homotopy 
equivalent in $\twcxub\left(\A\right)$ to some $F \in \A$. 
By the homotopy transfer of structure (Theorem
\ref{theorem-homotopy-transfer-of-structure-for-ainfty-modules-new})
we can transfer the $\Ainfty$-structure $p_\bullet$ from 
$(E_i,\alpha_{ij})$ to $F$. We thus obtain an $\Ainfty$-$A$-module 
$(F,q_\bullet)$ homotopy equivalent to $((E_i,\alpha_{ij}), p_\bullet)$, 
as desired. 

\em ``Only if'': \rm  
The forgetful functor $\nodA \rightarrow \A$ has a right
inverse: the functor $\A \rightarrow \nodA$
which sends any object $a \in \A$ to $(a,p_\bullet)$ with $p_i = 0$ for all $i$
and sends any morphism $f\colon a \rightarrow b$ to $(f_\bullet)$ with 
$f_1 = f$ and $f_i = 0$ for $i > 1$. 
\end{proof}

Since $\B \hookrightarrow \twcxub \B$ is an equivalence, 
the same argument as in 
Cor.~\ref{cor-noddinfA-strongly-pretriangulated-iff-A-is} gives:
\begin{cor}
\label{cor-nod-A^B-has-convolutions-of-unbounded-twisted-complexes}
$\nodrepA$ has convolutions of unbounded twisted complexes. 
\end{cor}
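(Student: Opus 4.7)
The plan is to invoke Lemma \ref{lemma-admits-convolutions-of-unbounded-twisted-complexes}: by item (\ref{item-B-to-tw_B_B-is-equivalence}) applied with $\nodrepA$ in place of $\B$, it suffices to show that the tautological inclusion
$$\iota\colon \nodrepA \hookrightarrow \twcxub(\nodrepA),$$
sending each $\Ainfty$-module to itself viewed as a trivial twisted complex concentrated in degree zero, is an equivalence of DG categories. This reduction is legitimate because $\nodrepA$ inherits countable direct sums and shifts from $\B$ (as noted just before Prps.~\ref{prps-twcxub-nodA-embeds-into-nod-twcxub-A}), so $\twcxub(\nodrepA)$ taken relative to $\nodrepA$ is well-defined.

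First I would appeal to the second fully faithful embedding \eqref{eqn-twcxub-nodrepA-embeds-into-nod-twcxub-repA} from Prps.~\ref{prps-twcxub-nodA-embeds-into-nod-twcxub-A}, namely
$$\Phi\colon \twcxub(\nodrepA) \hookrightarrow \nodrepA.$$
The essential point is that the codomain is literally $\nodrepA$ itself and not some enlargement. This is a direct consequence of the hypothesis that $\B$ admits convolutions of unbounded twisted complexes, so that $\B \hookrightarrow \twcxub(\B)$ is an equivalence by Lemma \ref{lemma-admits-convolutions-of-unbounded-twisted-complexes}; thus the twisted complex of objects in $\B$ underlying any object of $\twcxub(\nodrepA)$ already represents an object of $\B$, and the transferred $\Ainfty$-structure produced by $\Phi$ lives on that object.

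Next I would verify that $\Phi \circ \iota$ is the identity functor on $\nodrepA$. Feeding a trivial twisted complex (single object $(E, p_\bullet)$ in degree zero, all inter-object differentials $\alpha_{ij\bullet}$ vanishing) into the explicit formulas \eqref{eqn-definition-of-Phi-on-objects} and \eqref{eqn-definition-of-Phi-on-morphisms} for $\Phi$ immediately returns $(E, p_\bullet)$ on objects and is the identity on morphisms. Hence $\iota$ is a section of the fully faithful functor $\Phi$, which forces $\Phi$ to be essentially surjective and therefore an equivalence; any one-sided inverse of an equivalence is itself an equivalence, so $\iota$ is an equivalence as required.

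I do not expect a substantive obstacle: all the genuine work has already been absorbed into the second embedding of Prps.~\ref{prps-twcxub-nodA-embeds-into-nod-twcxub-A} and into the hypothesis $\B \simeq \twcxub(\B)$. The argument is formally parallel to the ``if'' direction of Cor.~\ref{cor-noddinfA-strongly-pretriangulated-iff-A-is}, but strictly easier: here no homotopy transfer of structure is needed because the target of $\Phi$ coincides with the starting category on the nose, whereas in the previous corollary the target $\nodtwcxA$ had to be collapsed back to $\nodA$ via a transfer argument.
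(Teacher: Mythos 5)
Your proof is correct and takes essentially the same approach as the paper: the paper's one-line proof invokes ``the same argument as in Cor.~\ref{cor-noddinfA-strongly-pretriangulated-iff-A-is}'' with $\B$ playing the role of $\A$, which is precisely your reduction via the second embedding $\Phi$ of Prps.~\ref{prps-twcxub-nodA-embeds-into-nod-twcxub-A} together with the observation that $\Phi \circ \iota = \id$. You simply make the argument explicit rather than citing it.
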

It follows by Lemma
\ref{lemma-admits-convolutions-of-unbounded-twisted-complexes}, that
the convolutions of unbounded twisted complexes in $\nodA$ and
$\nodrepA$ take value in $\nodrepA$, as desired.  

\appendix

\section{Homotopy transfers of structure for $\Ainfty$-modules}
\label{section-homotopy-transfers-of-structure}

In \cite{Markl-TransferringAinftyStronglyHomotopyAssociativeStructures}
Markl described the homotopy transfer of structure for (the usual) 
$\Ainfty$-algebras over a commutative ring. In this section we give its 
analogue for $\Ainfty$-modules over an $\Ainfty$-category. One of the reasons 
to write this down in detail, is to convince ourselves that it works just 
the same for our new notion of $\Ainfty$-modules over an
$\Ainfty$-algebra in a DG monoidal category $\A$ introduced in 
\S\ref{section-ainfty-structures-in-monoidal-dg-categories}. 

Another is that that the bar construction for morphisms of modules,
unlike that for morphisms of algebras, is additive. As result, we can 
give simple explicit formulas for the transfer in terms of 
the bar construction. 

First we describe the homotopy transfer of structure for 
classical $\Ainfty$-modules.  

Let $\A$ be a small $\Ainfty$-category over a commutative ring $k$
in the sense of \cite{Lefevre-SurLesAInftyCategories} 
As usual, denote by $k_\A$ the minimal $k$-linear category with the
same objects as $\A$: $\homm_{k_\A}(a,b)$ is $0$ when $a \neq b$
and $k$ when $a = b$. The category of graded $k_\A$-$k_\A$-bimodules
has a natural monoidal structure given by  $\otimes_k$.
The $\Ainfty$-category $\A$ can be naturally viewed as an $\Ainfty$-algebra 
in this category, and $\Ainfty$-$\A$-modules --- as $\Ainfty$-modules over 
this algebra in the category of graded $k_\A$-modules. 

For our purposes, it is more natural to consider $\A$ to be an $\Ainfty$-algebra 
in the category $k_\A$-$\modd$-$k_\A$ of differentially graded 
$k_\A$-$k_\A$-bimodules. In other words, what is usually known 
as the operation $m_1$ becomes the intrinsic data of the differential 
of the DG-$k_\A$-$k_\A$-bimodule $\A$. The structure $m_\bullet$
of an $\Ainfty$-algebra on this DG-bimodule consists then of
$k_\A$-$\modd$-$k_\A$ maps 
$$ m_i\colon \A^{\otimes_k i} \rightarrow  \A \quad \quad \quad \quad,
\deg(m_i) = 2 - i,\;  i \geq 2. $$ 
Write $\infbar \A$ for the \em bar construction \rm of $\A$
\cite[\S1.2.2]{Lefevre-SurLesAInftyCategories}. It is a DG-coalgebra in 
$k_\A$-$\modd$-$k_\A$ equal as a graded coalgebra to the free
tensor coalgebra $\bigoplus_{i = 0}^\infty \A^{\otimes_k i}$. 

Let $P$ be a right $\Ainfty$-module over $\A$. We consider it, again, 
as a DG-$k_\A$-module with an $\Ainfty$-structure $\mu_\bullet$ given 
by $\modd$-$k_\A$ maps 
$$ \mu_i: P \otimes_k \A^{\otimes_k (i-1)} \rightarrow P, 
\quad \quad \quad \quad \deg(\mu_i) = 2 - i,\; i \geq 2. $$
Write $\infbar P$ for the  \em bar construction \rm of $\A$
\cite[\S2.3.3]{Lefevre-SurLesAInftyCategories}. It is a 
DG-$\infbar \A$-comodule equal as a graded comodule 
to the free comodule $P \otimes_k \infbar \A$. 

Suppose that there is another $k_\A$-module $Q$ and two morphisms 
$$ f\colon  P \rightarrow  Q, $$
$$ g\colon  Q \rightarrow  P, $$
in $\modd$-$k_\A$ such that $gf = \id + dh$ for some degree $-1$ map $h: P \to P$. 

\begin{theorem}
\label{theorem-homotopy-transfer-of-structure-for-ainfty-modules}
A homotopy transfer of $A_\infty$-structure from $P$ to $Q$ exists: 
\begin{enumerate} 
\item A structure $\nu_\bullet$ of 
a right $A_\infty$-module over $A$ on $Q$;
\item A closed degree zero map of $A_\infty$-modules $\phi_\bullet: P \to Q$ extending $f$;
\item A closed degree zero map of $A_\infty$-modules $\psi_\bullet: P \to Q$ extending $g$;
\item A degree $-1$ map of $\Ainfty$-modules $H_\bullet\colon P \rightarrow P$
extending $h$, such that 
$$\psi_\bullet \circ \phi_\bullet = \id + d(H_\bullet).$$
\end{enumerate}
\end{theorem}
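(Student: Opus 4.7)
The plan is to translate the problem into the bar-construction setting, apply the classical homological perturbation lemma, and read off the required $\Ainfty$-data. The whole argument is made possible by the fact that morphisms of $\Ainfty$-modules correspond, via $\infbar$, to \emph{additive} morphisms of cofree comodules, so the perturbation-lemma formulas apply directly rather than needing the more delicate tree sums used in the algebra case.

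First, I would lift $f$, $g$, $h$ to morphisms of graded $\infbar\A$-comodules
$$\tilde f := f \otimes \id_{\infbar\A}, \quad \tilde g := g \otimes \id_{\infbar\A}, \quad \tilde h := h \otimes \id_{\infbar\A}$$
between $\infbar P = P \otimes_k \infbar\A$ and $\infbar Q = Q \otimes_k \infbar\A$. Cofreeness of the bar comodules makes this the unique comodule lift extending $f, g, h$, and a direct check gives $\tilde g \tilde f - \id = [\dnat, \tilde h]$, where $\dnat$ is the natural differential coming from the internal differentials on $P$, $Q$ and $\A$. The full bar differential on $\infbar P$ takes the form $D_P = \dnat + \delta$, where $\delta$ is the degree $1$ coderivation assembled from $\mu_{\geq 2}$ and $m_{\geq 2}$; crucially, $\delta$ strictly decreases the filtration of $\infbar P$ by the number of $\A$-tensor factors, while $\tilde f, \tilde g, \tilde h$ preserve it.

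Next, I would apply the homological perturbation lemma to the strong deformation retraction $(\infbar P, \infbar Q, \tilde f, \tilde g, \tilde h)$ together with the perturbation $\delta$ of $\dnat$. Convergence of $(\id - \tilde h \delta)^{-1} = \sum_{k \geq 0} (\tilde h \delta)^k$ is automatic in each filtration degree since only finitely many terms contribute, and this is the only convergence question in the entire argument. The lemma then produces
$$D_Q = \dnat + \tilde f (\id - \delta \tilde h)^{-1} \delta \tilde g, \qquad \Phi = (\id - \tilde h \delta)^{-1} \tilde f, \qquad \Psi = \tilde g (\id - \delta \tilde h)^{-1},$$
together with an explicit homotopy $\tilde H$ realising $\Psi \Phi - \id = [D_P, \tilde H]$. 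Each of these is a comodule map or coderivation by inspection, since the lifts are comodule maps and $\delta$ is a coderivation; the bar-construction dictionary of $\S\ref{section-ainfinity-structures-definitions}$ then converts them into $\nu_\bullet$, $\phi_\bullet$, $\psi_\bullet$, $H_\bullet$, with $\phi_1 = f$, $\psi_1 = g$, $H_1 = h$ visible at filtration degree zero.

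The main step is the verification of the four perturbation-lemma identities $D_Q^2 = 0$, $\Phi D_Q = D_P \Phi$, $\Psi D_P = D_Q \Psi$, and $\Psi\Phi - \id = [D_P, \tilde H]$: expand the geometric series, use $\delta^2 + [\dnat, \delta] = 0$ (which encodes that $D_P$ is a differential) and $\tilde g \tilde f - \id = [\dnat, \tilde h]$ term by term, and collect. The only substantive worry is that the transition from $\Ainfty$-modules over a commutative ring to $\Ainfty$-modules in an arbitrary DG monoidal category $\A$ might break some implicit linearity; but since the argument uses only cofreeness of the bar comodule, the coderivation property of $\delta$, and the filtration-lowering that guarantees convergence --- all of which make sense in any DG monoidal category admitting the relevant countable sums, as guaranteed by the ambient $\B$ fixed in $\S\ref{section-twisted-complexes-of-ainfty-modules}$ --- the proof goes through essentially unchanged. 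Sign bookkeeping is the only remaining technicality, controlled by the conventions fixed in $\S\ref{section-key-isomorphism-for-twisted-complexes}$.
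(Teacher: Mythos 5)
Your strategy is the paper's own proof: lift $f,g,h$ cofreely to comodule maps $\bar f,\bar g,\bar h$ of the bar constructions, perturb the differential, and resum a geometric series. The paper does not cite the perturbation lemma by name but writes out exactly your operator $(\id-\delta\bar h)^{-1}\delta$ as $\bar\rho=\bar\mu+\bar\mu\bar h\bar\mu+\bar\mu\bar h\bar\mu\bar h\bar\mu+\cdots$, sets $\bar\nu=\bar f\bar\rho\bar g$, $\bar\phi=\bar f(\id+\bar\rho\bar h)$, $\bar\psi=(\id+\bar h\bar\rho)\bar g$, $\bar H=\bar h(\id+\bar\rho\bar h)$, and verifies the four identities directly; your convergence remark (filtration by tensor length) is also theirs.

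Two cautions. First, you cannot invoke the classical lemma entirely as a black box: the hypothesis is only $gf=\id_P+dh$, with no condition relating $fg$ to $\id_Q$ and no side conditions, so the input is not a strong deformation retraction as you call it. The standard statement of the basic perturbation lemma assumes the exact retraction $pi=\id$ on the small side; here one must re-verify $D_Q^2=0$ and the closedness of $\Phi,\Psi$ using only $\dnat(\bar h)=\bar g\bar f-\id$, which is precisely the ``expand and collect'' computation the paper carries out (and it does go through). Second, watch the placement of operators: the correct closed lift of $f$ is $\bar f\circ(\id-\delta\bar h)^{-1}=\bar f(\id+\bar\rho\bar h)$, not $(\id-\bar h\delta)^{-1}$ precomposed as written in your formula for $\Phi$ (which does not even typecheck, since $\bar h$ and $\delta$ are endomorphisms of $\infbar P$, and swapping the order of $\bar h$ and $\delta$ inside the inverse changes the map). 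Neither point derails the argument, but both must be fixed in the write-up.
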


To prove this, we use the bar-construction 
as a fully faithful embedding of the DG-category of $\Ainfty$-$\A$-modules 
into the DG-category of DG-$\infbar \A$-comodules. See 
\cite[\S2.3.3]{Lefevre-SurLesAInftyCategories}. We give a brief
summary. Let $E$ and $F$ be two $\Ainfty$-$\A$-modules. 
Let $x_\bullet\colon E \rightarrow F$ be a degree $n$ morphism of 
$\Ainfty$-modules. By definition, it is an arbitrary collection of
degree $n+1-i$ maps 
$$ x_i: E \otimes_k \A^{\otimes_k (i-1)} \rightarrow F
\quad \quad \quad \quad i \geq 1, $$
in $\modd$-$k_\A$. Such collection is equivalent to the data
of a degree $n$ $\modd$-$k_\A$ map
$$ x_\bullet\colon \infbar E \rightarrow F, $$
because as a graded module $\infbar E$ is just $E \otimes_k \infbar \A$. 
By universal property of free graded comodules, there is a bijective
correspondence of $k_\A$-module morphisms
$$ \infbar E \rightarrow F, $$
with the morphisms of DG-$\infbar \A$-comodules 
$$ \infbar E \rightarrow \infbar F. $$
It sends $x_\bullet$ to the map 
\begin{equation}
\label{eqn-definition-of-the-bar-construction-of-Ainfty-morphism}
\bar{x}\colon E \otimes_k \infbar \A \xrightarrow{\id \otimes \Delta}
E \otimes_k \infbar \A \otimes_k \infbar \A \xrightarrow{x_\bullet \otimes \id}
F \otimes_k \infbar \A,
\end{equation}
and, conversely, sends any morphism $\bar{x}$ of DG-$\infbar \A$-comodules to
$$ x_\bullet\colon  E \otimes_k \infbar \A \xrightarrow{\bar{x}} F \otimes_k \infbar \A 
\xrightarrow{\id \otimes \epsilon} F. $$
Here $\Delta$ and $\epsilon$ is are the comultiplication and the counit of $\infbar \A$. 

\begin{proof}
By our convention, the $\Ainfty$-structure $\mu_\bullet$ on $P$ is a collection 
of $\mu_i$ for $i \geq 2$. View it as the data of a degree one
$\Ainfty$-morphism $\mu_\bullet\colon P \rightarrow P$ with $\mu_1 = 0$.  
The differential on the bar construction $\infbar P$ is 
$$ \bar{\mu} + \dnat, $$
where $\bar{\mu}$ is the bar construction of the $\Ainfty$-morphism $\mu_\bullet$ 
as per \eqref{eqn-definition-of-the-bar-construction-of-Ainfty-morphism}
and $\dnat$ is the natural differential on the tensor product $P \otimes_k \infbar \A$. 
We then have
$$ (\bar{\mu} + \dnat)^2 = \bar{\mu}^2 + \bar{\mu} \circ \dnat + \dnat
\circ \bar{\mu} = \bar{\mu^2} + d(\bar{\mu}), $$
and therefore  
$$\dnat(\bar{\mu}) = - \bar{\mu}^2. $$
Here $\dnat(-)$ denotes the differentiation as 
an endomorphism of $P \otimes_k \infbar A$, as opposed as an endomorphism of $\infbar P$. 

Now define 
$$ \bar{\rho} = \bar{\mu} + \bar{\mu} \bar{h} \bar{\mu} + 
\bar{\mu} \bar{h} \bar{\mu} \bar{h} \bar{\mu} + ... $$
where $\bar{h}$ is the bar construction of $h$ 
viewed as a strict $\Ainfty$-morphism. Note that
\begin{align*}
\overline{\rho} \overline{h} \overline{\mu}  = 
    \overline{\mu} \overline{h} \overline{\rho} & = 
    \overline{\rho} - \overline{\mu}, \\ 
\dnat(\bar{\rho}) & = \bar{\rho}^2 - \bar{\rho} \overline{dh} \bar{\rho}. 
\end{align*}
Since 
$$ \dnat(\bar{h}) = \overline{dh} = \overline{g}\overline{f} -
\id, $$  
we conclude that  
$$ \dnat(\bar{\rho}) = - \bar{\rho}\bar{g}\bar{f}\bar{\rho}. $$

Define the DG-$\infbar \A$-module morphism 
$\bar{\nu}\colon Q \otimes_k \infbar \A \rightarrow  Q \otimes_k \infbar \A$ by 
$$ \bar{\nu} = \bar{f} \bar{\rho} \bar{g}. $$
Since $f$ and $g$ are closed of degree $0$, we have 
$$ \dnat(\bar{\nu}) = \bar{f} \dnat(\bar{\rho}) \bar{g} 
= -  \bar{f} \bar{\rho} \bar{g} \bar{f} \bar{\rho} \overline{g} = 
- \bar{\nu}^2. $$
We therefore have
$$ (\bar{\nu} + \dnat)^2 = 0, $$
so $\bar{\nu} + \dnat$ defines a new differential on $Q \otimes_n \infbar \A$.
Let $\nu_\bullet$ be the corresponding structure of $\Ainfty$-$\A$-module on $Q$. 

Define next 
\begin{align*}
\bar{\phi} &= \bar{f}(\id +  \bar{\rho}\bar{h} ), \\
\bar{\psi} &= (\id + \bar{h}\bar{\rho}) \bar{g}, \\
\bar{H} &= \bar{h}(\id + \bar{\rho}\bar{h}) = (\id +
\bar{h}\bar{\rho})\overline{h}. 
\end{align*}

We have:
\begin{align*}
d\overline{f} = (\overline{\nu} + \dnat) f - f(\overline{\mu} + \dnat)
= \overline{\nu}\overline{f} - \overline{f}\overline{\mu} + \dnat(f)  
= \overline{f}\overline{\rho}\overline{g}\overline{f} - \overline{f}\overline{\mu}
= \overline{f}(\overline{\rho}\overline{g}\overline{f} - \overline{\mu}).
\end{align*}
and similarly:
\begin{align*}
d\overline{g} &= 
\dots 
= (\overline{\mu} - 
\overline{g}\overline{f}\overline{\rho})\overline{g}, 
\\
d\overline{h} & = \dots 
= \overline{\mu}\overline{h} + \overline{h}\overline{\mu} +
\overline{g}\overline{f} - \id,
\\
d(\id + \overline{\rho}\overline{h}) &=
\dots = \left(\overline{\mu} - \overline{\rho} \overline{g}\overline{f}\right)
\left(\id + \overline{\rho}\overline{h}\right),
\\
d(\id + \overline{h}\overline{\rho}) &=
\dots = \left(\overline{\mu} - \overline{g}\overline{f} \overline{\rho} \right)
\left(\id + \overline{h} \overline{\rho}\right),
\end{align*}

We thus finally compute
\begin{align*}
d(\bar{\phi}) & =
d\left(\overline{f}\right)\left(\overline{\rho}\overline{h} + \id\right) 
+ 
\overline{f}d\left(\overline{\rho}\overline{h} + \id\right) = 
\\
& = \overline{f}\left(\overline{\rho}\overline{g}\overline{f} -
\overline{\mu}\right)(\overline{\rho}\overline{h} + \id) 
+ 
\overline{f}
\left(\overline{\mu} - \overline{\rho}\overline{g}\overline{f}\right)
\left(\id + \overline{\rho}\overline{h}\right)
=  0, 
\\
d(\bar{\psi}) & = \dots = 0, 
\\
d(\overline{H}) & =
\dots = 
\overline{\psi}\overline{\phi} - \id , 
\end{align*}
as desired. 
\end{proof}

It follows that the transfer of structure across a homotopy
equivalence produces a homotopy equivalent $\Ainfty$-module:
\begin{cor}
If $f$ and $g$ are mutually inverse homotopy equivalences, 
then the $\Ainfty$-$\A$-module $(Q, \nu_\bullet)$
obtained from $(P,\mu_\bullet)$ 
by the homotopy transfer of structure along $(f,g)$ is
homotopy equivalent to $(P,\mu_\bullet)$. 
\end{cor}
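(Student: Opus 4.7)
The hypothesis ``mutually inverse homotopy equivalences'' provides, in addition to the given $h\colon P \to P$ with $gf - \id_P = dh$, a degree $-1$ morphism $h'\colon Q \to Q$ in $\modd\text{-}k_\A$ with $fg - \id_Q = dh'$. The theorem as stated already gives $\psi \phi - \id_P = dH$; to upgrade this to a two-sided homotopy equivalence I must also produce a degree $-1$ $\Ainfty$-morphism $H'\colon Q \to Q$ with $\phi\psi - \id_Q = dH'$. My plan is to apply Theorem \ref{theorem-homotopy-transfer-of-structure-for-ainfty-modules} a second time, in the reverse direction: starting from the transferred $\Ainfty$-module $(Q, \nu_\bullet)$ and using the data $(g, f, h')$, the theorem yields an $\Ainfty$-structure $\mu'_\bullet$ on $P$ together with $\Ainfty$-morphisms $\eta\colon P \to Q$ extending $f$, $\chi\colon Q \to P$ extending $g$, and a degree $-1$ $\Ainfty$-morphism $K\colon Q \to Q$ with $\eta\chi = \id_Q + dK$.

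Combining the two applications, I have two closed degree $0$ $\Ainfty$-endomorphisms of $(Q, \nu_\bullet)$, namely $\phi\psi$ and $\eta\chi$, whose underlying chain maps both equal $fg$. The next step is to exhibit an explicit $\Ainfty$-homotopy $L\colon Q \to Q$ with $\phi\psi - \eta\chi = dL$. Using the explicit bar formulas $\bar\phi = \bar f(\id + \bar\rho \bar h)$, $\bar\psi = (\id + \bar h \bar\rho)\bar g$ from the first transfer and their analogues $\bar\eta = \bar f(\id + \bar{\rho}'\bar{h'})$, $\bar\chi = (\id + \bar{h'}\bar{\rho}')\bar g$ from the second (where $\bar{\rho}'$ is the perturbation series built from $\nu_\bullet$ and $h'$), the difference $\bar\phi\bar\psi - \bar\eta\bar\chi$ expands into a sum of terms each involving $\bar\rho, \bar{\rho}', \bar h, \bar{h'}$, which I plan to reorganise into $\dnat$ of a carefully assembled $\bar L$, in close analogy with the calculation $\dnat(\bar H) = \bar\psi\bar\phi - \id$ already carried out in the proof of the theorem. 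Setting $H' := K + L$ then yields $\phi\psi - \id_Q = dH'$, and combined with $\psi\phi - \id_P = dH$ this exhibits the desired $\Ainfty$-homotopy equivalence $(P, \mu_\bullet) \simeq (Q, \nu_\bullet)$.

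The main obstacle is the bar-construction calculation producing $L$. It parallels and combines the telescoping identities for $\bar\rho$ (notably $\bar\rho \bar h \bar\mu = \bar\mu \bar h \bar\rho = \bar\rho - \bar\mu$) used to verify $d\bar\phi = d\bar\psi = 0$ and $d\bar H = \bar\psi\bar\phi - \id$, but now with two separate perturbation series simultaneously, and the sign bookkeeping propagating through the comultiplication of $\infbar A$ is the most delicate part. A slicker route would be to invoke the general principle that quasi-isomorphisms of $\Ainfty$-modules are homotopy equivalences and apply it to $\phi$, but a self-contained proof of that principle in our setting would require comparable effort.
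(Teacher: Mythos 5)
The paper's proof is a one-liner and corresponds exactly to what you dismiss at the end as the ``slicker route'': the closed degree-zero $\Ainfty$-morphism $\phi_\bullet$ produced by Theorem~\ref{theorem-homotopy-transfer-of-structure-for-ainfty-modules} has $\phi_1 = f$, which is a homotopy equivalence of DG-$k_\A$-modules, and by \cite[2.4.1.1]{Lefevre-SurLesAInftyCategories} an $\Ainfty$-morphism of $\Ainfty$-modules whose first component is a homotopy equivalence is itself an $\Ainfty$-homotopy equivalence. Since that reference is precisely the tool the theory is set up to cite, the paper's approach is both shorter and self-contained once Lef\`evre's result is granted.

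Your main proposed route --- applying the homotopy transfer a second time in the reverse direction and then trying to compare the two resulting $\Ainfty$-endomorphisms of $(Q,\nu_\bullet)$ --- is genuinely different, but has a real gap. The crucial step, producing an $\Ainfty$-homotopy $L$ with $\phi\psi - \eta\chi = dL$, is only asserted. You say the difference ``expands into a sum of terms involving $\bar\rho, \bar\rho', \bar h, \bar{h'}$'' and that you ``plan to reorganise'' it into $\dnat$ of some $\bar L$, but there is no a priori reason for such a reorganisation to exist: $\bar\rho$ is a geometric series built from $\bar\mu$ (the perturbation of the natural differential on $\infbar P$) and $\bar h$, whereas $\bar\rho'$ is built from $\bar\nu$ and $\bar{h'}$, living on $\infbar Q$. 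The telescoping identities used in the theorem's proof, such as $\bar\rho\bar h\bar\mu = \bar\mu\bar h\bar\rho = \bar\rho - \bar\mu$, hold within a single perturbation series; mixing two of them, based at different objects, does not obviously telescope. A separate, smaller issue: your claimed analogues $\bar\eta = \bar f(\id + \bar{\rho}'\bar{h'})$ and $\bar\chi = (\id + \bar{h'}\bar{\rho}')\bar g$ do not typecheck, since $\bar{\rho}'$ and $\bar{h'}$ are endomorphisms of $\infbar Q$. Running the theorem from $(Q,\nu_\bullet)$ to $P$ with data $(g,f,h')$ gives instead $\bar\chi = \bar g(\id + \bar{\rho}'\bar{h'})\colon \infbar Q \to \infbar P$ and $\bar\eta = (\id + \bar{h'}\bar{\rho}')\bar f\colon \infbar P \to \infbar Q$, with the perturbation applied on the $\infbar Q$ side. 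Finally, you briefly note that the second transfer produces a new structure $\mu'_\bullet$ on $P$ but do not address the consequences; this does not block $\eta\chi$ from being an $\Ainfty$-endomorphism of $(Q,\nu_\bullet)$, but it does mean your comparison involves two transfers starting from \emph{different} $\Ainfty$-modules, which makes the hoped-for telescoping even less plausible. In short, the approach is conceivable but the central computation is missing and is not straightforward; you would be better served by simply carrying out the cited route.
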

\begin{proof}
As part of the homotopy transfer of structure constructed in Theorem
\ref{theorem-homotopy-transfer-of-structure-for-ainfty-modules}, 
we have obtained a closed degree zero $\Ainfty$-morphism 
$$ \phi_\bullet\colon (P,\mu_\bullet) \rightarrow (Q,\nu_\bullet) $$
extending $f$, i.e. $\phi_1 = f$. Thus $\phi_1$ is a homotopy
equivalence of DG-$k_\A$-modules, and therefore $\phi_\bullet$ is a homotopy 
equivalence of $\Ainfty$-$\A$-modules \cite[2.4.1.1]{Lefevre-SurLesAInftyCategories}. 
\end{proof}

The method we used to prove Theorem
\ref{theorem-homotopy-transfer-of-structure-for-ainfty-modules}
can be easily applied to the notion of $\Ainfty$-algebras and 
$\Ainfty$-modules in a DG monoidal category introduced in 
\S\ref{section-ainfty-structures-in-monoidal-dg-categories}:
\begin{theorem}
\label{theorem-homotopy-transfer-of-structure-for-ainfty-modules-new}
Let $(A,m_\bullet)$ be an $\Ainfty$-algebra in a monoidal DG category $\A$. 
Let $(a,p_\bullet) \in \nodA$. Let $b$ be an object of $\A$. 
Suppose there exist morphisms  
$f\colon a \rightarrow b$ and $g\colon b \rightarrow a$ in $\A$
such that $gf = \id + dh$ for some degree $-1$ morphism $h\colon a  
\rightarrow a$.  

Then a homotopy transfer of $A_\infty$-structure from $(a,p_\bullet)$ to $b$ exists: 
\begin{enumerate} 
\item A structure $q_\bullet$ of an $A_\infty$-$A$-module on $b$; 
\item A closed degree $0$ map of $\Ainfty$-$A$-modules 
$\phi_\bullet: (a,p_\bullet) \to (b,q_\bullet)$ extending $f$;
\item A closed degree $0$ map of $A_\infty$-$A$-modules
$\psi_\bullet: (b,q_\bullet) \to (a,p_\bullet)$ extending $g$;
\item A degree $\text{-}1$ map of $\Ainfty$-$A$-modules 
$H_\bullet\colon (a,p_\bullet) \rightarrow (a,p_\bullet)$
extending $h$ with 
$$\psi_\bullet \circ \phi_\bullet = \id + d(H_\bullet).$$
\end{enumerate}
\end{theorem}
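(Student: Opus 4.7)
The plan is to mimic verbatim the proof of Theorem~\ref{theorem-homotopy-transfer-of-structure-for-ainfty-modules}, using the bar construction of Definition~\ref{defn-right-module-bar-construction-in-a-monoidal-category} as a fully faithful embedding $\infbar\colon \nodA \hookrightarrow \pretriagmns_{\B}(\A)$ in place of the embedding into DG-comodules used in the classical case. The underlying principle is that an $\Ainfty$-$A$-module structure on $b$ amounts to a certain endomorphism of the graded object $\bigoplus_{k \geq 1} b \otimes A^{k-1}$ satisfying a Maurer--Cartan equation, and all formulas for the transfer and the extending $\Ainfty$-morphisms can be written universally in the ambient category $\B$.

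First, decompose the twisted differential of $\infbar(a)$ as $\bar{D} = \bar{m} + \bar{p}$, where $\bar{m}$ collects the $\id^{i-j-1} \otimes m_{k+1} \otimes \id^j$ summands of \eqref{eqn-differentials-in-right-module-bar-construction} (independent of the module structure) and $\bar{p}$ collects the left-most $p_{k+1} \otimes \id^{i-1}$ summands. The twisted complex condition $\dnat(\bar{D}) + \bar{D}^2 = 0$ together with the fact that $(A,m_\bullet)$ itself is an $\Ainfty$-algebra imply $\dnat_m(\bar{p}) = -\bar{p}^2$, where $\dnat_m = \dnat + [\bar{m}, -]$. Extend the morphisms $f,g,h$ to $\bar{f}, \bar{g}, \bar{h}$ on the underlying graded objects by tensoring with $\id^{k-1}$ on each component; since $f,g,h$ are plain morphisms in $\A$ and act only on the leftmost factor, each of $\bar{f}, \bar{g}, \bar{h}$ commutes with $\bar{m}$ and satisfies $\dnat \bar{f} = 0$, $\dnat \bar{g} = 0$, $\dnat \bar{h} = \bar{g}\bar{f} - \id$.

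The crucial finiteness observation is that each $p_{k+1}$ strictly lowers the $A$-tensor-power grading by $k \geq 1$, hence so does $\bar{p}$, while $\bar{h}$ preserves it. Therefore the formal sum
\begin{equation*}
\bar{\rho} \;=\; \sum_{n \geq 0} \bar{p}\bigl(\bar{h}\bar{p}\bigr)^n
\end{equation*}
has only finitely many nonzero components on each source object $a \otimes A^{k-1}$ and defines a genuine endomorphism of the underlying graded object. Define
\begin{equation*}
\bar{q} \;:=\; \bar{f}\bar{\rho}\bar{g}, \qquad \bar{\phi} \;:=\; \bar{f}(\id + \bar{\rho}\bar{h}), \qquad \bar{\psi} \;:=\; (\id + \bar{h}\bar{\rho})\bar{g}, \qquad \bar{H} \;:=\; \bar{h}(\id + \bar{\rho}\bar{h}),
\end{equation*}
and let $q_\bullet$, $\phi_\bullet$, $\psi_\bullet$, $H_\bullet$ be their components projecting to the leftmost factor, which recover an $\Ainfty$-module structure and $\Ainfty$-morphisms by the correspondence of Definitions~\ref{defn-right-module-bar-construction-in-a-monoidal-category} and \ref{defn-right-module-bar-constructions-of-an-Ainfty-morphism}.

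The remaining content is the identities $\dnat_m(\bar{q}) = -\bar{q}^2$, $d\bar{\phi} = 0$, $d\bar{\psi} = 0$, and $d\bar{H} = \bar{\psi}\bar{\phi} - \id$, which amount to the assertions (1)--(4). Each reduces by the same telescoping manipulations as in the proof of Theorem~\ref{theorem-homotopy-transfer-of-structure-for-ainfty-modules}, using the three input identities $\dnat_m(\bar{p}) = -\bar{p}^2$, $\dnat(\bar{h}) = \bar{g}\bar{f} - \id$, and the commutation of $\bar{m}$ with $\bar{f}, \bar{g}, \bar{h}$. The main obstacle is purely bookkeeping: the signs in Definitions~\ref{defn-right-module-bar-construction-in-a-monoidal-category} and \ref{defn-right-module-bar-constructions-of-an-Ainfty-morphism} are more elaborate than the classical conventions, so the verification that $\bar{q}$ recovers a legitimate family $\{q_i\}$ of degree $2-i$ morphisms $b \otimes A^{i-1} \to b$, and similarly for $\phi_\bullet, \psi_\bullet, H_\bullet$, requires care; once that is set up, the algebraic identities themselves are formally identical to the classical case because the derivation $\dnat_m$ plays exactly the role of $\dnat$ in the original proof.
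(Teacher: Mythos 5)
Your proposal is correct and follows essentially the same route as the paper: both replace the DG-comodule bar construction of the classical Theorem~\ref{theorem-homotopy-transfer-of-structure-for-ainfty-modules} with the twisted-complex bar construction $\infbar\colon\nodA\to\pretriagmns(\A)$, split off the $p$-dependent piece $\bar{p}$ from the differential of $\infbar(a,p_\bullet)$ (your $\dnat_m$ is precisely the paper's ``$\dnat$, differentiation as an endomorphism of $\infbar(a,0)$''), deduce $\dnat_m(\bar{p})=-\bar{p}^2$, and then run Markl-style perturbation formulas $\bar{\rho}=\sum\bar{p}(\bar{h}\bar{p})^n$, $\bar{q}=\bar{f}\bar{\rho}\bar{g}$, etc.\ verbatim. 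You add a useful explicit remark on the finiteness/convergence of $\bar{\rho}$ and on the commutation of $\bar{m}$ with $\bar f,\bar g,\bar h$, which the paper leaves implicit in ``proceed in the same way as Theorem~\ref{theorem-homotopy-transfer-of-structure-for-ainfty-modules}''.
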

\begin{proof}
For $\Ainfty$-$A$-modules, the bar construction was defined in 
\S\ref{section-ainfinity-structures-definitions}
as a DG-functor
$$ \infbar\colon \noddinf(T) \rightarrow \pretriagmns(\A). $$ 
Take the data $p_\bullet$ of the $\Ainfty$-$T$-module structure on
$a$ and consider it as the data of a morphism of
$\Ainfty$-$A$-modules with $p_1 = 0$. Let 
$\bar{p}$ be its bar construction:
$$ \bar{p}\colon \infbar(a,p_\bullet) \rightarrow
\infbar(a,p_\bullet). $$

Consider now two twisted complexes: $\infbar(a,p_\bullet)$ 
and $\infbar(a,0)$. They have the same objects. 
Denote by $\dnat(-)$ the operation of
differentiating an endomorphism of $\infbar(a,p_\bullet)$ as 
if it was an endomorphism of $\infbar (a,0)$. 
Since the differential on $\infbar(a,p_\bullet)$ is the sum 
of the differential on $\infbar(a,0)$ and $\bar{p}$, we have
$$ \dnat(\bar{p}) = - \bar{p}^2. $$

We can now proceed in the same way and with the same computations 
as in the proof of Theorem
\ref{theorem-homotopy-transfer-of-structure-for-ainfty-modules}, 
only with all bar constructions being twisted complexes instead 
of DG-comodules. 
\end{proof}
\begin{cor}
If $f$ and $g$ are mutually inverse homotopy equivalences in $\A$, 
then the $\Ainfty$-$A$-module $(b,q_\bullet)$
obtained from $(a,p_\bullet)$ by the homotopy transfer of structure 
along $(f,g)$ is homotopy equivalent to $(a,p_\bullet)$. 
\end{cor}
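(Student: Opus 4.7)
The plan is to follow the strategy of the analogous classical corollary stated just above. First, we extract from Theorem \ref{theorem-homotopy-transfer-of-structure-for-ainfty-modules-new} the closed degree zero $\Ainfty$-morphism $\phi_\bullet\colon (a,p_\bullet) \to (b,q_\bullet)$ with $\phi_1 = f$, together with the closed degree zero $\psi_\bullet\colon (b,q_\bullet) \to (a,p_\bullet)$ (with $\psi_1 = g$) and the degree $-1$ homotopy $H_\bullet$ satisfying $\psi_\bullet\circ\phi_\bullet = \id + dH_\bullet$. Since by hypothesis $f$ is a homotopy equivalence with inverse $g$, this already provides one direction of the homotopy equivalence in $\nodA$.

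It remains to show $\phi_\bullet\circ\psi_\bullet \sim \id_{(b,q_\bullet)}$. The cleanest route is to invoke an analog of the Lef\`evre-Hasegawa Whitehead-type theorem \cite[2.4.1.1]{Lefevre-SurLesAInftyCategories}: any closed degree zero $\Ainfty$-morphism whose linear part is a homotopy equivalence in $\A$ is itself a homotopy equivalence in $\nodA$. Under the faithful bar construction $\infbar\colon \nodA \hookrightarrow \pretriagmns(\A)$, $\phi_\bullet$ becomes a closed degree zero morphism $\bar\phi$ of bounded above twisted complexes. Both bar constructions carry a natural tensor-length filtration; $\bar\phi$ respects it, with associated graded pieces $f\otimes\id_{A^{\otimes k}}$ for each $k \geq 0$, all of which are homotopy equivalences in $\A$ because $f$ is. A standard obstruction-theoretic argument, using $g$ and the homotopies $gf = \id + dh$ and $fg = \id + dh'$, then lifts a homotopy inverse from the associated graded to all of $\bar\phi$ level by level.

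A more self-contained alternative is to apply Theorem \ref{theorem-homotopy-transfer-of-structure-for-ainfty-modules-new} a second time, starting from $(b,q_\bullet)$ with the transfer data $(g,f,h')$ where $h'$ satisfies $fg = \id + dh'$. This yields a structure $p'_\bullet$ on $a$ and closed degree zero morphisms $\phi'_\bullet\colon (b,q_\bullet) \to (a,p'_\bullet)$ with $\phi'_1 = g$ and $\psi'_\bullet\colon (a,p'_\bullet) \to (b,q_\bullet)$ with $\psi'_1 = f$, together with $H'_\bullet$ satisfying $\psi'_\bullet\circ\phi'_\bullet = \id_{(b,q_\bullet)} + dH'_\bullet$. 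Combining the two applications, one checks that $\phi_\bullet\circ\psi_\bullet$ agrees up to $\Ainfty$-homotopy with $\psi'_\bullet\circ\phi'_\bullet$, giving $\phi_\bullet\circ\psi_\bullet \sim \id_{(b,q_\bullet)}$ as required.

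The main obstacle is closing the loop in the final comparison: in the direct approach, verifying that the inductive lift of the homotopy inverse is compatible with all the twisted differentials encoding $m_\bullet$, $p_\bullet$ and $q_\bullet$ on the bar constructions; in the two-application approach, correctly identifying $\phi_\bullet\circ\psi_\bullet$ with $\psi'_\bullet\circ\phi'_\bullet$ in the homotopy category of $\nodA$. Both reduce to the same content: the Whitehead-type argument of \cite[2.4.1.1]{Lefevre-SurLesAInftyCategories} continues to hold in our DG monoidal setting, because its proof is purely formal, relying only on the fact that bar constructions are twisted complexes with a well-behaved tensor-length filtration and that $\B$ admits convolutions of unbounded twisted complexes.
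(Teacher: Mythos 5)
Your primary route is exactly the paper's: the transfer theorem yields a closed degree-zero morphism $\phi_\bullet$ with $\phi_1 = f$ a homotopy equivalence, and one concludes by the Whitehead-type criterion of \cite[2.4.1.1]{Lefevre-SurLesAInftyCategories} (which the paper invokes for the classical corollary and implicitly carries over to the DG monoidal setting) that $\phi_\bullet$ is a homotopy equivalence of $\Ainfty$-modules. The additional machinery you propose — the explicit obstruction-theoretic lift and the second application of the transfer theorem — is not needed and is not what the paper does, but the core argument is the same and correct.
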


\bibliography{references}
\bibliographystyle{amsalpha}
\end{document}